\numberwithin{equation}{section}
\newtheorem{theorem}{Theorem}%[section]
\newtheorem{meta-thm}[theorem]{Meta-Theorem}
\newtheorem{proposition}[theorem]{Proposition}
\newtheorem{remark}[theorem]{Remark}
\newtheorem{definition}[theorem]{Definition}
\DeclareMathAlphabet{\mathcalligra}{T1}{calligra}{m}{n}
\newcommand\beq[1]{ \begin{equation}\label{#1} }
\newcommand{\eeq}{ \end{equation} }
\newcommand{\beqno}{ \[ }
\newcommand{\eeqno}{ \] }
\newcommand\beqa[1]{ \begin{eqnarray} \label{#1}}
\newcommand{\eeqa}{ \end{eqnarray} }
\newcommand{\beqano}{ \begin{eqnarray*} }
\newcommand{\eeqano}{ \end{eqnarray*} }
\newcommand\equ[1]{{\rm (\ref{#1})}}
\def\E{{\mathcal E}}
\def\complex{{\mathbb C}}
\begin{document}

\title[The dynamics of the spin-spin problem in Celestial Mechanics]
{The dynamics of the spin-spin problem in Celestial Mechanics}

\author[A.P. Bustamante]{Adrián P. Bustamante}
\address{Department of Mathematics and Physics, University of Roma Tre, Via della Vasca Navale 84, 00146 Roma (Italy)}
\email{adrian.perezbustamante@uniroma3.it}

\author[A. Celletti]{Alessandra Celletti}
\address{
Department of Mathematics, University of Roma Tor Vergata, Via
della Ricerca Scientifica 1, 00133 Roma (Italy)}
\email{celletti@mat.uniroma2.it}

\author[C. Lhotka]{Christoph Lhotka}
\address{
Department of Mathematics, University of Roma Tor Vergata, Via
della Ricerca Scientifica 1, 00133 Roma (Italy)}
\email{lhotka@mat.uniroma2.it}

%%%%%%%%%%%%%%%%%%%%%%%%%
\baselineskip=18pt              %% DRAFT MODE -- double spaced.

\thanks{A.B. has been supported by the research project  MIUR-PRIN 2020XBFL "Hamiltonian and Dispersive PDEs". C.L. acknowledges the support from the Excellence Project 2023-2027 MatMod@TOV, the project MIUR-PRIN 20178CJAB "New Frontiers of Celestial Mechanics: Theory and Applications", and the support of GNFM/INdAM}

\maketitle

\begin{abstract}
This work investigates different models of rotational dynamics of two  rigid bodies with the shape of an ellipsoid, moving under their gravitational influence. We assume that the spin axes of the two bodies are perpendicular to the orbital plane and coinciding with the direction of their shortest physical axis. In the basic approximation, we assume that the orbits of the centers of mass are Keplerian and we retain the lowest order of the potential, according to which the rotational motions of the two bodies are decoupled, the so-called \sl spin-orbit \rm problem. When considering highest order approximation of the potential, the rotational motions become coupled giving rise to the so-called \sl spin-spin \rm problem. Finally, we release the assumption that the orbit is Keplerian, the \sl full spin-spin \rm problem, which implies that the rotational dynamics is coupled to the variation of the orbital elements. We also consider the above models under the assumption that one or both bodies are non rigid; the dissipative effect is modeled by a linear function of the rotational velocity, depending on some dissipative and drift coefficients. 

We consider three main resonances, namely the (1:1,1:1), (3:2,3:2), (1:1,3:2) resonances and we start by analyzing the linear stability of the equilibria in the conservative and dissipative settings (after averaging and keeping only the resonant angle), showing that the stability depends on the value of the orbital eccentricity. By a numerical integration of the equations of motion, we compare the spin-orbit and spin-spin problems to highlight the influence of the coupling term of the potential in the conservative and dissipative case. We conclude by investigating the effect of the variation of the orbit on the rotational dynamics, showing that higher order resonant islands, that appeared in the Keplerian case, are destroyed in the full problem. 
\end{abstract}

\keywords{\bf Keywords. \rm Spin-orbit problem, Spin-spin problem, Stability}

% \clearpage
% \tableofcontents
% \clearpage

\section{Introduction}
The dynamics of two bodies orbiting around each other under their mutual gravitational  influence is a topic of interest in Celestial Mechanics, since it might concern a large variety of bodies in the Solar system, most notably planet-satellite pairs. More recently, this interest has grown thanks to some space missions, which have been devoted to the exploration of minor bodies, including binary asteroids, which are typically characterized by irregular shapes. 

Within the two-body orbital-rotational interaction, the simplest model is represented by the \sl spin-orbit coupling, \rm which studies the dynamics of a rigid triaxial ellipsoid around a point mass central planet under simplifying assumptions on the spin-axis, which is taken perpendicular to the orbit plane and coinciding with the shortest physical axis of the ellipsoid (see, e.g., \cite{cel1990}, \cite{cel2010}, \cite{gol1966}). This model has been used in \cite{Colombo} to conjecture the non-synchronous rotation of Mercury or in \cite{wis1984} to conjecture the chaotic rotation of Hyperion. Relaxing the rigidity assumption, one needs to consider a tidal torque as given, e.g., in \cite{Peale}, which brings to the analysis of a dissipative system, that leads to the probability of capture  into resonance (\cite{corlas2004}), the shapes of the basins of attraction (\cite{cel2008}, \cite{CellettiLhotka2014}) and the proof of the existence of quasi-periodic attractors (\cite{cel2009}, \cite{CCGL20a}, \cite{CCGL20b}, \cite{CCGL20c}). 

The irregular shape of minor binary objects requires to consider both interacting bodies with their own shape (see, e.g., \cite{Lu2023}, \cite{Brennan2024}). This problem is known in its generality as the \sl full two-body problem. \rm If we assume that the orbits of the centers of mass of such bodies are Keplerian, then we speak of the \sl Keplerian spin-spin problem. \rm  An exhaustive description of the Keplerian spin-spin problem is given in \cite{mac1995}, see also \cite{sch2002}, \cite{bou2017}, \cite{hou2017}, \cite{housch2017}. In this context, remarkable results have been obtained in \cite{Scheeres}, providing stability conditions, and in \cite{BellroseScheeres2008}, that computes families of periodic orbits and analyzes the relative equilibria. The long-term evolution of the rotational dynamics has been studied in \cite{boulas2009}. An extensive numerical study of the resonances in the full two-body problem is given in \cite{jaf2016}. 

The rotational dynamics of two rigid bodies, as well as the coupling with the orbital motion, might be complex and deserves a thorough investigation. When relaxing the assumption that the orbits are Keplerian, we speak of the \sl full spin-spin problem. \rm Studies of such model are performed, e.g., in \cite{Misra2016}. 

In this work, we intend to analyze the interaction between the rotational dynamics of the two bodies as well as the coupling with their orbits. To this end, we proceed to study models of increasing complexity, from the spin-orbit problem to the Keplerian and full spin-spin models. 
We also consider the dissipative case when one or both bodies are non-rigid; the corresponding dissipation is modeled as in \cite{Peale} with a linear function of the rate of variation of the rotation angle. 

\vskip.1in 

The content of this work is the following. In Section~\ref{sec:spin} we provide the basic notions for the spin-orbit and spin-spin problems, including the definitions of spin-orbit and spin-spin resonances; a special case is given by the synchronous resonance in which the rotational and orbital periods are the same. The equations of motion of the full and Keplerian problems are given in Section~\ref{sec:equations}, while the dissipation is considered in Section~\ref{sub:dissipative}. 
The study of the equilibria and linear stability for the synchronous Keplerian spin-spin resonance, possibly including the dissipation, is given in Section~\ref{sec:res11}, while higher order resonances are shortly analyzed in Section~\ref{sec:res32}. Finally, in Section~\ref{sec:comparison} we provide a comparison between the spin-orbit and spin-spin problems, as well as a numerical investigation of the coupling between the rotational and orbital motions. Some conclusions are given in Section~\ref{sec:conclusions}.

\section{Spin-orbit and spin-spin problems: set-up}\label{sec:spin}

We consider the dynamics of two homogeneous rigid bodies $\E_1$, $\E_2$ having the shape of two ellipsoids and orbiting under their
mutual gravitational attraction. This is a classical problem of Celestial
Mechanics which, in its full generality, is known as the {\sl full
two-body problem} (see, e.g., \cite{Scheeres}). 

\vskip.1in

We consider the dynamics of $\E_1$, $\E_2$ under the following assumptions:

A) The spin axis of each body is perpendicular to the orbital
plane.

B) The direction of the spin axis of each body coincides with the direction of its shortest physical axis.

As a consequence of A) and B), we are assuming that the motion of the centers of mass of the ellipsoids takes place on a plane. 

We refer to the problem described by the assumptions A) and B) as the \sl full spin-spin problem. \rm If, in addition, we consider the following assumption: 

C) The centers of mass of the two ellipsoids move on coplanar
Keplerian orbits with a common focus coinciding with the barycenter of the system, 

then, we refer to the corresponding model as the \sl Keplerian spin-spin problem, \rm since the orbit is Keplerian and it is not coupled with the rotational motion. 

The potential $V_{per}$ describing the gravitational interaction between the two bodies (compare with \equ{V0per} below) will be approximated by the first two terms, say $V_{per}=V_2+V_4$ (using the notation of \cite{CGM2021}), where $V_2$ contains trigonometric terms in which the
rotation angles of the two bodies appear in different terms (hence, the dynamics of each ellipsoid is decoupled from the other body), while
$V_4$ contains terms in which the rotation angles appear in
combination. If we limit the study to the
term $V_2$, we speak of the \sl spin-orbit \rm problem, while if we
consider both terms $V_2$ and $V_4$, we speak of the \sl spin-spin \rm
problem.

\vskip .1in

We introduce the following definition of resonance for a single ellipsoid, like in the spin-orbit problem, in which the two bodies are decoupled.  

\begin{definition}
	We say that the body $\E$ is in a $m:n$ spin-orbit resonance,
	for some non-zero integers $m$, $n$, whenever it makes $m$
	rotations within $n$ orbital revolutions. 
\end{definition}

This definition can be extended to the spin-spin problem as follows. 

\begin{definition}\label{def:res2}
We say that two bodies $\E_1$ and $\E_2$ are in a spin-spin 
resonance of type $(m_1:n_1,m_2:n_2)$ for
non-zero integers $m_1$, $m_2$, $n_1$, $n_2$, whenever the two
ellipsoids are, respectively, in a $m_1:n_1$ and $m_2:n_2$ spin-orbit resonance.
\end{definition}

\section{The equations of motion}\label{sec:equations}

We denote by $M_1$ and $M_2$ the masses of $\E_1$ and $\E_2$, while 
$A_j\leq B_j\leq C_j$, $j=1,2$, are the 
principal moments of inertia associated to the semiaxes that we denote as 
$\mathsf a_j\geq \mathsf b_j\geq \mathsf c_j$.

We introduce normalized units of measure, so that 
\[
  M_1+M_2=1,\quad C_1+ C_2=1,\quad \tau=2\pi\ ,
\]
where $\tau$ is the orbital period of the Keplerian orbit. Then, 
Kepler's third law reads as
\[
  G(M_1+M_2) (\frac{\tau}{2\pi})^2=a^3 \ ,
\]
where $G$ is the gravitational constant, $a$ is the semimajor axis
of the reduced mass of the system, say $\mu=M_1M_2$ in the above units. As a consequence, we have that $G=a^3$ and $n=\sqrt{{{G(M_1+M_2)}\over {a^3}}}=1$.

Next, we introduce the parameters $d_j$ and $q_j$ associated to the ellipsoid $\E_j$ as
$$
  d_j=B_j-  A_j, \qquad
  q_j=2  C_j-   B_j-   A_j \ .
$$
We remark that the quantities $d_j/  C_j$ and $q_j/ C_j$, related to the equatorial oblateness and the flattening of each ellipsoid, are typically small for Solar system objects with regular shape. We also mention the following constraints on the parameters (see \cite{CGM2021}): 
$$
    0\le d_j \le  C_j\le 1,\quad d_j\le q_j\le 2
    C_j\le 2,\quad M_j \mathsf a_j^2=\frac{5}{2}( C_j+d_j)\le
    5 C_j \le 5 \ .
$$

\subsection{The full spin-orbit and spin-spin models}

We start by considering the \sl full \rm spin-orbit
and spin-spin models, without constraining the orbits to be Keplerian ellipses.
We denote by $T$ the kinetic energy that we split as $T=T_{orb}+T_{rot}$, with 
$T_{orb}$ the orbital part and $T_{rot}$ the rotational part; we denote by $V$ the potential energy. We assume that the center of mass of the system coincides with the origin and we identify the orbital plane with the complex plane
$\complex$; we denote by $\vec r_j\in \complex$ the position of $\E_j$. Then, we have 
$$
M_1 \vec r_1+M_2 \vec r_2=\vec 0\ ;
$$
defining $\vec r= \vec r_2-\vec r_1$, we have 
$$
\vec r_1=-M_2 \vec r\ ,\quad \vec r_2=M_1 \vec r\ .
$$
Let $r=|\vec r|$ and let $f$ be the angle between the direction of $\vec r$ and a horizontal reference line, so that $\vec r = r \exp (if)$; we denote by $(\theta_1,\theta_2)$ the rotation angles of $\E_1$ and $\E_2$, namely the angles formed by the longest axis of the ellipsoids, lying on the orbital plane due to assumptions A) and B), and the horizontal reference axis. We can write the kinetic energy as 
\begin{equation*}
  T_{orb}=\frac{1}{2} (M_1 {\dot{\vec r}_1}^2 +M_2 {\dot{\vec r}_2}^2)
  = \frac{\mu}{2} (\dot r^2 + r^2 \dot
  f^2),\quad T_{rot}=\frac{1}{2}  C_1 \dot \theta_1^2 +
  \frac{1}{2}  C_2 \dot \theta_2^2\ .
\end{equation*}
We introduce the momenta conjugated to $r$, $f$, $\theta_1$, $\theta_2$ as
\begin{equation*}
    p_r=\mu \dot r,\quad
    p_f=\mu r^2\dot f,\quad
    p_1 =  C_1 \dot \theta_1,\quad
    p_2 =  C_2 \dot \theta_2\ . 
\end{equation*}
Then, the Hamiltonian can be written as 
$$
    H(p_r,p_f,p_1,p_2,r,f,\theta_1,\theta_2)=\frac{p_r^2}{2\mu}
    + \frac{p_f^2}{2\mu r^2}
    + \frac{p_1^2}{2 C_1} + \frac{p_2^2}{2 C_2}
    +V(\theta_1,\theta_2,r,f)
$$
with associated equations of motion 
\beq{orbital_eqs}
    \dot r = \frac{p_r}{\mu}, \quad
    \dot f = \frac{p_f}{\mu r^2},\quad
    \dot p_r =  \frac{p_f^2}{\mu r^3} - \partial_r V , \quad
    \dot p_f =  - \partial_{f} V
\eeq
for the orbit and
\beq{spin-spin_V}
\dot \theta_j =\frac{p_j}{ C_j},\quad
\dot p_j =  - \partial_{\theta_j} V\ ,\qquad j=1,2 
\eeq
for the rotation. We notice that the coupling between the spin and the orbit is given through the potential $V$, which can be written as 
\begin{equation}\label{V0per}
  V(\theta_1,\theta_2,r,f) = V_0(r) + V_{per}(\theta_1,\theta_2,r,f)
\end{equation}
with $V_0$ the Keplerian potential 
$$
V_0(r)=-\frac{GM_1M_2}{r}\ .
$$
The coupling potential $V_{per}$ can be expanded as $V_{per}=\sum_{l=1}^\infty V_{2l}$ where, following \cite{mis2021} (see also \cite{CGM2021}), the first two terms are given as 
\begin{eqnarray}\label{V024}
    V_2&=&-\frac{GM_2}{4r^3} (q_1+3d_1\cos(2\theta_1-2f))
    -\frac{GM_1}{4r^3} (q_2+3d_2\cos(2\theta_2-2f))\ ,\nonumber\\
    V_4&=&-\frac{3G}{4^3 r^5} \Bigg \{
    12q_1q_2 + \frac{15}{7} [\frac{M_2}{M_1}d_1^2+2\frac{M_2}{M_1}q_1^2 + \frac{M_1}{M_2}d_2^2+2\frac{M_1}{M_2}q_2^2]\nonumber\\
    &&\hspace{1.5cm} +d_1 M_2\left \{ [20\frac{q_2}{M_2} + \frac{100}{7}\frac{q_1}{M_1}]\cos (2\theta_1-2f) +25 \frac{d_1}{M_1}\cos (4\theta_1-4f)
    \right \}\nonumber\\
    &&\hspace{1.5cm}  +d_2 M_1 \left \{[20\frac{q_1}{M_1} + \frac{100}{7}\frac{q_2}{M_2}]\cos (2\theta_2-2f) + 25 \frac{d_2}{M_2}\cos (4\theta_2-4f)
    \right \}\nonumber\\
    &&\hspace{1.5cm} +6d_1 d_2 \cos (2\theta_1-2\theta_2)+70d_1 d_2  \cos (2\theta_1+2\theta_2-4f) \Bigg \}\ .
\end{eqnarray}
With reference to \equ{orbital_eqs} and \equ{spin-spin_V}, where the orbit and the rotation are coupled, we refer to the 

- {\sl full spin-orbit model} if $V_{per}=V_2$, 

- {\sl full spin-spin model} if $V_{per}=V_2+V_4$.

We remark that, as noticed in \cite{CGM2021}, the total angular momentum $P_f=p_f+p_1+p_2$ is a constant of motion.

\subsection{The Keplerian spin-orbit and spin-spin models}\label{sec:Kepler}
In the {\sl Keplerian spin-orbit model} and {\sl  Keplerian spin-spin model}, we constrain the orbit to be Keplerian, which means that in \equ{orbital_eqs} we limit the potential to $V_0$ (which depends just on the radius) and in 
\equ{spin-spin_V} we consider only $V_{per}$ (since $V_0$ does not depend on $\theta_j$), where $r$ and $f$ are the solutions of \equ{orbital_eqs}:
\beqa{orbital_eqs_kep}
  \dot r &=& \frac{p_r}{\mu}, \quad
  \dot f = \frac{p_f}{\mu r^2},\quad \dot p_r =  \frac{p_f^2}{\mu r^3} - \partial_r V_0 , \quad
  \dot p_f = 0,\nonumber\\
  \dot \theta_j &=&\frac{p_j}{ C_j},\quad
  \dot p_j = - \partial_{\theta_j}V_{per}\ ,\qquad j=1,2\ .
\eeqa
The equations for the orbital motion are the classical solutions of 
Kepler's problem, which describe an ellipse with semimajor axis $a$ and 
eccentricity $e$. Due to the normalization of the units of measure, the mean anomaly coincides with the time and we can express the orbital solution as 
$$
    r=r(t;a,e),\quad f=f(t;e),\quad
    p_r=p_r(t;a,e),
$$
supplemented by the first integral 
$$
p_f=p_f(a,e)=\mu a^2\sqrt{1-e^2}\ . 
$$
The orbital motion is described by the following 2D, time dependent Hamiltonian: 
\begin{equation}\label{HK}
H_K(p_1,p_2,\theta_1,\theta_2,t)={{p_1^2}\over {2
    C_1}}+{{p_2^2}\over {2 C_2}}+V_{per}(\theta_1,\theta_2,t)\ .
\end{equation}
We observe that, if we take $V_{per}=V_2$, then we obtain two uncoupled Keplerian spin-orbit models; if we take $V_{per}=V_2+V_4$, we have a coupling between the rotational motions of the two ellipsoids. 

For later use, it is useful to define the following non-dimensional parameters:
$$
    \lambda_j = 3\frac{\mu}{M_j} \frac{d_j}{ C_j} \ , \qquad
    \sigma_j=\frac{1}{3}\frac{ C_j}{\mu a^2},\qquad \hat
    q_j=\frac{q_j}{M_j a^2}\ ,
$$
which are limited by the constraint $ C_1\sigma_2= C_2\sigma_1$.

\section{The dissipative Keplerian spin-orbit and spin-spin models}\label{sub:dissipative}
In this Section, we consider the Keplerian spin-orbit and spin-spin models by adding the dissipative effect due to the tidal torque. In particular, we consider \equ{HK} with $V_{per}=V_2+V_4$:
\beqa{diss}
C_1\ddot\theta_1+({{\partial V_2}\over {\partial\theta_1}}+{{\partial V_4}\over {\partial\theta_1}})&=&
-\delta_1 C_1({a\over {r(t)}})^6\ (\dot\theta_1-\dot f(t))\nonumber\\
C_2\ddot\theta_2+({{\partial V_2}\over {\partial\theta_2}}+{{\partial V_4}\over {\partial\theta_2}})&=&
-\delta_2 C_2({a\over {r(t)}})^6\ (\dot\theta_2-\dot f(t))\ ,
\eeqa
where we added the dissipation in the form of MacDonald torque (see, e.g., \cite{Peale}) at the right hand sides of \equ{diss};
the dissipation depends on the dissipative constants $\delta_1$, $\delta_2$, which are typically small
with respect to the gravitational part.

\vskip.1in

We can possibly replace the right hand sides of \equ{diss} by their averages over one orbital period, thus obtaining
the following equations:
\beqa{dissave}
{ C_1}\ddot\theta_1+({{\partial V_2}\over {\partial\theta_1}}+{{\partial V_4}\over {\partial\theta_1}})&=&
-\bar\gamma_1\ (\dot\theta_1-\bar\mu_1)\nonumber\\
{ C_2}\ddot\theta_2+({{\partial V_2}\over {\partial\theta_2}}+{{\partial V_4}\over {\partial\theta_2}})&=&
-\bar\gamma_2\ (\dot\theta_1-\bar\mu_2)
\eeqa
with $\bar\gamma_1$, $\bar\gamma_2$, $\bar\mu_1$, $\bar\mu_2$ constants, whose explicit expressions are
\beqa{gm}
\bar\gamma_j&=&\delta_j{ C_j}{1\over {(1-e^2)^{9\over 2}}}\ (1+3e^2+{3\over 8}e^4)\ ,\nonumber\\
\bar\mu_j&=&{n\over {(1-e^2)^{3\over 2}}}\ {{1+{{15}\over 2}e^2+{{45}\over 8}e^4+{5\over {16}}e^6}\over {1+3e^2+{3\over 8}e^4}}\ ,\qquad j=1,2\ ,
\eeqa
where we remind that $n$ denotes the mean motion. The derivation of \equ{gm} is presented in Appendix~\ref{app:gm}.
We refer to \equ{diss} or \equ{dissave} as \sl double-dissipative \rm systems, since the dissipation acts on both bodies. 
We will also consider \sl single-dissipative \rm systems by assuming that the dissipation acts directly only on one body, 
which corresponds to take, for example, $\bar\gamma_1$, $\bar\mu_1\not=0$ and $\bar\gamma_2=\bar\mu_2=0$.

\section{Equilibria and linear stability for the (1:1,1:1) Keplerian spin-spin resonance}\label{sec:res11}
Let us write the equations of motion of the averaged double-dissipative Keplerian spin-orbit and spin-spin problems in the form
\beqano
{ C_1} \ddot\theta_1  + \frac{\partial V^{(\chi)}}  {\partial \theta_1} &=& -\bar\gamma_1(\dot\theta_1 -\bar\mu_1)\nonumber\\
{ C_2} \ddot\theta_2 + \frac{\partial V^{(\chi)}}{\partial \theta_2} &=& -\bar\gamma_2(\dot \theta_2 -\bar\mu_2)\ ,
\eeqano
where we introduced the function $V^{(\chi)}(\theta_1, \theta_2,t)  = V_2(\theta_1, \theta_2,t) +\chi V_4(\theta_1, \theta_2,t)$; we recover the spin-orbit problem by taking $\chi=0$
and the spin-spin model by taking $\chi=1$.
The expansions of $V_2$ and $V_4$ will be considered up to second order in the eccentricity as given in Appendix \ref{sub:potential}.

To deal with the  (1:1,1:1) spin-spin resonances as in Definition \ref{def:res2}, we introduce the
angles $$ \varphi_1 = 2(\theta_1-t), \qquad \varphi_2 = 2(\theta_2
-t)\ .$$
The equations of
motion considering just the terms that depend on
$\varphi_1$ and $\varphi_2$, namely the terms that do not vanish
after averaging with respect to time, become
\beqa{eqall}
\frac{C_1}{2}\ddot\varphi_1 + K_1 \sin(\varphi_1) + \chi \{ K_2\sin(\varphi_1) + K_3\sin(2\varphi_1) +
K_4\sin(\varphi_1 + \varphi_2) &+& K_5\sin(\varphi_1 - \varphi_2) \} \nonumber\\
&=& -\bar\gamma_1\left(\frac{\dot\varphi_1}{2} + 1 -\bar\mu_1\right) \nonumber\\
\frac{C_2}{2}\ddot\varphi_2 + L_1 \sin(\varphi_2) + \chi \{ L_2\sin(\varphi_2) + L_3\sin(2\varphi_2) +
L_4\sin(\varphi_1 + \varphi_2) &-&L_5\sin(\varphi_1 -\varphi_2) \} \nonumber\\
&=& -\bar\gamma_2\left(\frac{\dot\varphi_2}{2} + 1 -\bar\mu_2\right).\nonumber\\
\eeqa
where, from Appendix \ref{sub:potential}, we have 
\beqa{KL}
K_1 &=& \frac{3d_1GM_2}{2a^3}(1-\frac{5}{2}e^2),\qquad\qquad\ \ L_1 =
\frac{3d_2GM_1}{2a^3}(1-\frac{5}{2}e^2)\nonumber\\
K_2&=&\frac{75d_1GM_2q_1}{56a^5M_1} + \frac{75d_1e^2GM_2q_1}{56a^5M_1} +\frac{15d_1Gq_2}{8a^5} + \frac{15d_1e^2Gq_2}{8a^5} \nonumber \\
L_2 &=& \frac{75d_2GM_1q_2}{56a^5M_2} + \frac{75d_2e^2GM_1q_2}{56a^5M_2} + \frac{15d_2Gq_1}{8a^5} + \frac{15d_2e^2Gq_1}{8a^5}\nonumber\\
K_3&=& \frac{75d_1^2G M_2}{16a^5M_1}(1-11e^2),\qquad\qquad\ L_3 = \frac{75d_2^2GM_1}{16a^5M_2}(1-11e^2) \nonumber\\
K_4 &=& \frac{105d_1d_2G}{16a^5}(1-11e^2), \qquad\qquad\ L_4
= \frac{105d_1d_2G}{16a^5}(1-11e^2)\nonumber\\
K_5 &=& \frac{9d_1d_2G}{16a^5} + \frac{45d_1d_2e^2G}{16a^5},\qquad\qquad L_5 = \frac{9d_1d_2G}{16a^5} + \frac{45d_1d_2e^2G}{16a^5} .
\eeqa
We notice that $K_2$, $L_2$, $K_5$, $L_5$ are always positive; $K_1$, $L_1$ are positive for $e<\sqrt{2\over 5}$; $K_3$, $L_3$, $K_4$, $L_4$ are positive for $e<{1\over {\sqrt{11}}}$. In the following, 
we analyze separately the linear stability of the conservative (Section \ref{cons11}) and dissipative (Section \ref{diss11}) case.

\subsection{Conservative case, (1:1,1:1) resonance}\label{cons11}
In the conservative case, we set $\bar\gamma_1=\bar\gamma_2 = 0$ and we write the equations of motion \equ{eqall} as 
\beqa{eq1}
    \dot\varphi_1 &=& \frac{2}{C_1} J_1 \nonumber\\
    \dot\varphi_2 &=& \frac{2}{C_2} J_2 \nonumber\\
    \dot J_1 &=& - K_1 \sin(\varphi_1) - \chi \{ K_2\sin(\varphi_1) + K_3\sin(2\varphi_1) + K_4\sin(\varphi_1 + \varphi_2) +
    K_5\sin(\varphi_1 -\varphi_2) \} \nonumber\\
    \dot J_2 &=&-L_1 \sin(\varphi_2) - \chi \{ L_2\sin(\varphi_2) +
    L_3\sin(2\varphi_2) + L_4\sin(\varphi_1 + \varphi_2) - L_5\sin(\varphi_1 -\varphi_2) \} \label{eqcons}\ .\nonumber\\
\eeqa

The equilibrium points are located at $J_1=J_2 = 0$, while the angles are the solutions of the equations
\beqa{eq:crit-conservative}
    K_1 \sin(\varphi_1) + \chi \{ K_2\sin(\varphi_1) + K_3\sin(2\varphi_1) + K_4\sin(\varphi_1 + \varphi_2) +
    K_5\sin(\varphi_1 - \varphi_2) \}  &=& 0 \nonumber\\
    L_1 \sin(\varphi_2) + \chi \{ L_2\sin(\varphi_2) + L_3\sin(2\varphi_2) + L_4\sin(\varphi_1 + \varphi_2) - 
    L_5\sin(\varphi_1 - \varphi_2) \} &=& 0\ .\nonumber\\
\eeqa

\begin{remark}
Notice that $(\varphi_1,\varphi_2) = (0,0)$, $(0,\pi)$, $(\pi,0)$, $(\pi,\pi)$ are solutions of \equ{eq:crit-conservative}. The existence of other solutions of \eqref{eq:crit-conservative} might be possible and depend on the coefficients appearing in \eqref{eq:crit-conservative}.
\end{remark}

We analyze the character of the equilibria by linearizing the equations of motion \equ{eqcons}.
To this end, let $(\varphi_1^0,\varphi_2^0)$ be a solution of \equ{eq:crit-conservative} and set
$$
\varphi_1=\varphi_1^0+\xi_1\ ,\qquad \varphi_2=\varphi_2^0+\xi_2
$$
for $\xi_1$, $\xi_2$ small. From \equ{eqcons}, we have
the following linearized equations of motion at the equilibrium point $J_1=0$, $J_2=0$, $\varphi_1=\varphi_1^0$, and
$\varphi_2=\varphi_2^0$:
\beqa{eqlincons}
    \dot\xi_1 &=& \frac{2}{C_1}J_1 \nonumber\\
    \dot\xi_2 &=& \frac{2}{C_2}J_2 \nonumber\\
    \dot J_1 &=& -\left\{ (K_1 + \chi K_2)\cos(\varphi_1^0) +2\chi K_3\cos(2\varphi_1^0) +
    \chi K_4 \cos(\varphi_1^0 + \varphi_2^0) + \chi K_5\cos(\varphi_1^0-\varphi_2^0) \right\}\xi_1 \nonumber\\
    &\quad& - \chi\left\{ K_4 \cos(\varphi_1^0 + \varphi_2^0) - K_5\cos(\varphi_1^0-\varphi_2^0) \right\} \xi_2 \nonumber\\
    \dot J_2 &=& - \chi\left\{ L_4 \cos(\varphi_1^0 + \varphi_2^0) - L_5\cos(\varphi_1^0-\varphi_2^0) \right\} \xi_1 \nonumber\\
    & \quad& -\left\{ (L_1 + \chi L_2)\cos(\varphi_2^0) +2\chi L_3\cos(2\varphi_2^0) + \chi L_4 \cos(\varphi_1^0
    + \varphi_2^0) +\chi L_5 \cos(\varphi_1^0-\varphi_2^0) \right\}\xi_2\ .\nonumber\\
\eeqa

The characteristic polynomial and the eigenvalues of \equ{eqlincons} depend on the
fixed points and the constants $K_i$, $L_i$.
The equilibria are either $(\varphi_1, \varphi_2, J_1, J_2) = (0, 0, 0, 0)$, $(\pi, 0, 0, 0)$,
$(0, \pi, 0, 0)$, $(\pi, \pi, 0, 0)$ or rather the point $(\bar\varphi_1,\bar\varphi_2, 0, 0)$
with $(\bar\varphi_1,\bar\varphi_2)$ solution of \equ{eq:crit-conservative}.

\subsubsection{Linear stability of the origin}
To provide an explicit example, let us consider the critical point at $(\varphi_1, \varphi_2,
J_1, J_2) = (0, 0, 0, 0)$; in this case the linear system is given by
\beqa{EMlin}
    \dot\xi_1 &=& \frac{2}{C_1}J_1 \nonumber\\
    \dot\xi_2 &=& \frac{2}{C_2}J_2 \nonumber\\
    \dot J_1 &=& -\left\{ (K_1 + \chi K_2) +2\chi K_3 + \chi K_4 + \chi K_5 \right\}\xi_1 - \chi
    \left\{ K_4  - K_5 \right\} \xi_2\nonumber\\
    \dot J_2 &=& - \chi\left\{ L_4 - L_5 \right\} \xi_1  -\left\{ (L_1 + \chi L_2) +2\chi L_3 + \chi L_4 +
    \chi L_5  \right\}\xi_2\ .
\eeqa
We start by analyzing the eigenvalues of the spin-orbit dynamics with $\chi=0$.

\begin{proposition}\label{pro:SO}
If $e<\sqrt{2\over 5}$, then the eigenvalues of the linearized uncoupled spin-orbit motion with $\chi=0$ in \equ{EMlin} for the critical point $(\varphi_1,\varphi_2,J_1,J_2)=(0,0,0,0)$ are purely imaginary. 
\end{proposition}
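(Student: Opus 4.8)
The plan is to exploit the fact that setting $\chi=0$ completely decouples the linearized system \equ{EMlin} into two independent planar linear systems, one in the variables $(\xi_1,J_1)$ and one in $(\xi_2,J_2)$. Indeed, with $\chi=0$ the last two equations of \equ{EMlin} reduce to $\dot J_1=-K_1\xi_1$ and $\dot J_2=-L_1\xi_2$, so the coefficient matrix of the linearization is block-diagonal with $2\times 2$ blocks
$$
\begin{pmatrix} 0 & 2/C_1 \\ -K_1 & 0 \end{pmatrix}\ ,\qquad
\begin{pmatrix} 0 & 2/C_2 \\ -L_1 & 0 \end{pmatrix}\ .
$$

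First I would compute the characteristic polynomial of each block, namely $\lambda^2 + 2K_1/C_1$ for the first and $\lambda^2 + 2L_1/C_2$ for the second, so that the four eigenvalues of the full linearization are $\pm i\sqrt{2K_1/C_1}$ and $\pm i\sqrt{2L_1/C_2}$. Since $C_1,C_2>0$ by the normalization of the moments of inertia, these four numbers are purely imaginary (and nonzero) precisely when $K_1>0$ and $L_1>0$.

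Next I would read off the signs of $K_1$ and $L_1$ from \equ{KL}. As $G$, $a$, $M_1$, $M_2$ are positive and $d_1,d_2>0$ for genuinely triaxial ellipsoids, the sign of $K_1$ — and likewise that of $L_1$ — coincides with the sign of the common factor $1-\frac{5}{2}e^2$. Hence $K_1>0$ and $L_1>0$ if and only if $e<\sqrt{2/5}$, which gives the assertion.

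There is essentially no analytic obstacle here: once the decoupling is observed, the statement is a one-line linear-algebra computation. The only point worth isolating is the elementary equivalence between $K_1>0$ (resp. $L_1>0$) and $e<\sqrt{2/5}$, together with the positivity of the physical constants $C_j$ and $d_j$; note that at $e=\sqrt{2/5}$ both blocks become nilpotent and the eigenvalues collapse to $0$, so the purely-imaginary (nonzero) character is genuinely lost at the endpoint, which is why the hypothesis is a strict inequality.
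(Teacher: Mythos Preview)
Your proof is correct and follows essentially the same approach as the paper: set $\chi=0$ to decouple \equ{EMlin} into two independent $2\times 2$ blocks, compute the characteristic polynomial $(x^2+2K_1/C_1)(x^2+2L_1/C_2)$, and observe that the roots are purely imaginary precisely when $K_1,L_1>0$, i.e.\ when $e<\sqrt{2/5}$ from \equ{KL}. The only additions in your write-up are the explicit remark on the positivity of $C_j,d_j$ and the endpoint behavior at $e=\sqrt{2/5}$, which are fine but not essential.
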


\begin{proof}
Taking $\chi=0$ in \equ{EMlin}, we obtain two uncoupled systems described by the equations
\beqa{SOk}
\dot\xi_k&=&{2\over C_k} J_k\nonumber\\
\dot J_k&=&-Q_k\xi_k\ ,\qquad k=1,2\ ,
\eeqa
where $Q_1=K_1$, $Q_2=L_1$. The eigenvalues associated to \equ{SOk} are the solutions of the characteristic equation
$$
(x^2+{{2Q_1}\over C_1})(x^2+{{2Q_2}\over C_2})=0\ ,
$$
which are purely imaginary if $e<\sqrt{2\over 5}$, which implies $Q_j>0$, $j=1,2$. 
\end{proof}

Next, we analyze the eigenvalues of the spin-spin problem with $\chi=1$. Let us introduce the following notation:
\beqa{ab12}
a_1 &:=& -(K_1+ K_2 + 2K_3 + K_4 + K_5)\ ,\nonumber\\
a_2 &:=& -(L_4 - L_5)\ ,\nonumber\\
b_1 &:=&-(K_4-K_5)\ ,\nonumber\\
b_2 &:=& -(L_1 + L_2 + 2L_3 + L_4 + L_5)\ .
\eeqa
%Note that  $a_1$, $a_2$, $b_1$, $b_2$ are negative as long as %$e<\sqrt{6\over {{75}}}$; in fact, all coefficients $K_j$, $L_j$, %$j=1,...,5$, are positive for $e<{1\over {\sqrt{11}}}$ and this %implies also that $L_4-L_5>0$, $K_4-K_5>0$, since these latter two 
%inequalities are satisfied for $e<\sqrt{6\over {{75}}}$.  

Let $\alpha$, $\beta$ be defined as
\beq{ab2}
\alpha= -2(\frac{a_1}{C_1} + \frac{b_2}{C_2})\ , \qquad \beta=\frac{4}{C_1C_2}(a_1b_2 -a_2b_1)\ .
\eeq
%Then, $\alpha>0$ always and $\beta>0$ as long as $e<{1\over 
% {\sqrt{11}}}$. 

\begin{proposition}\label{pro:SS}
If $e<{1\over {\sqrt{11}}}$, then the eigenvalues of the linearized spin-spin motion for the $(1:1,1:1)$ resonance with $\chi=1$ in \equ{EMlin} are purely imaginary. 
\end{proposition}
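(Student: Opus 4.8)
The plan is to write out the $4\times 4$ linearized system \equ{EMlin} with $\chi=1$ in block form, with upper-left and lower-right $2\times 2$ blocks equal to zero, upper-right block $\mathrm{diag}(2/C_1,2/C_2)$, and lower-left block $\begin{pmatrix} a_1 & b_1 \\ a_2 & b_2 \end{pmatrix}$ using the notation \equ{ab12}. Because the system has the Hamiltonian skew structure $\dot\xi=MJ$, $\dot J=N\xi$ with $M=\mathrm{diag}(2/C_1,2/C_2)$ positive definite, the characteristic polynomial factors through $x^2$: setting $y=x^2$, the eigenvalues satisfy the quadratic $y^2+\alpha y+\beta=0$ with $\alpha$, $\beta$ exactly as in \equ{ab2} (this is a direct computation: $\alpha=-\mathrm{tr}(MN)$ and $\beta=\det(MN)$, where $N$ is the lower-left block). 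The four eigenvalues are then $\pm\sqrt{y_{\pm}}$, and they are all purely imaginary precisely when both roots $y_\pm$ are real and negative, i.e. when $\alpha>0$, $\beta>0$, and the discriminant $\alpha^2-4\beta\ge 0$.

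Next I would verify these three inequalities under the hypothesis $e<1/\sqrt{11}$. First note that this hypothesis guarantees $K_1,L_1>0$ (since $1/\sqrt{11}<\sqrt{2/5}$), $K_3,L_3,K_4,L_4>0$, and $K_2,L_2,K_5,L_5>0$ always; hence $a_1<0$ and $b_2<0$, so $\alpha=-2(a_1/C_1+b_2/C_2)>0$ immediately. For $\beta$ I would compute $a_1b_2-a_2b_1$. Here $a_1b_2$ is a product of two negative quantities, hence positive and "large", while $a_2b_1=(L_4-L_5)(K_4-K_5)$ is a product of the small coupling terms; since $K_4=L_4$ and in the relevant regime $K_4-K_5$ has a definite (positive) sign while its magnitude is controlled by $d_1d_2$, the dominant term $a_1b_2$ wins and $\beta>0$. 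For the discriminant, $\alpha^2-4\beta=4(a_1/C_1-b_2/C_2)^2+16\,a_2b_1/(C_1C_2)$; the first term is a nonnegative square, and the cross term $a_2b_1=(K_4-K_5)(L_4-L_5)$ is nonnegative in this eccentricity range (both factors are positive once $e<1/\sqrt{11}$), so $\alpha^2-4\beta\ge 0$ as well. This pins down all four eigenvalues as purely imaginary.

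The main obstacle I anticipate is the sign of $\beta$ and, relatedly, making the inequality $\alpha^2\ge 4\beta$ clean: $\beta$ involves a cancellation between the large "diagonal" product and the small coupling product, and one has to check that $d_1,d_2$ being bounded by $C_1,C_2$ (the physical constraints $0\le d_j\le C_j\le 1$ listed in Section~\ref{sec:equations}) is enough to keep $a_1b_2$ dominant — i.e. that the coupling coefficients $K_4,K_5,L_4,L_5$, which scale like $d_1d_2/a^5$, cannot overwhelm the product of the self-terms $a_1,b_2$, which contain the $O(d_j/a^3)$ contributions $K_1,L_1$. I would handle this by bounding $|a_2b_1|=|K_4-K_5||L_4-L_5|$ from above and $a_1b_2\ge K_1L_1$ from below, then invoking the smallness of the flattening parameters $d_j/C_j$ for regular-shaped bodies (or, more carefully, the explicit constraints) to conclude. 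The sign of $K_4-K_5$ (equivalently $L_4-L_5$) for $e<1/\sqrt{11}$ — positive because $105(1-11e^2)>9+45e^2$ exactly when $e^2<96/1200=2/25$, which is implied by $e^2<1/11$ — is the one subcalculation I would spell out, since it is what makes the cross term in the discriminant have the right sign.
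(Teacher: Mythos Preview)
Your overall strategy matches the paper's exactly: write the $4\times4$ linearization in block form, reduce the characteristic polynomial to the quadratic $y^2+\alpha y+\beta=0$ in $y=x^2$, and verify $\alpha>0$, $\beta>0$, $\alpha^2-4\beta\ge0$. Two concrete points go wrong, though.

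First, the arithmetic in your last paragraph is backwards. You claim that $e^2<1/11$ implies $e^2<2/25$, but $1/11\approx0.091>0.08=2/25$; so $K_4-K_5$ actually changes sign inside the interval $e<1/\sqrt{11}$ and your sign argument for the cross term fails. The paper avoids this completely by observing from \equ{KL} that $K_4=L_4$ and $K_5=L_5$, so that
\[
a_2b_1=(L_4-L_5)(K_4-K_5)=(K_4-K_5)^2\ge0
\]
regardless of the sign of $K_4-K_5$. This single identity gives $\alpha^2-4\beta=4(a_1/C_1-b_2/C_2)^2+16\,a_2b_1/(C_1C_2)\ge0$ immediately, with no eccentricity bookkeeping.

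Second, your proposed route to $\beta>0$ via ``smallness of the flattening parameters $d_j/C_j$'' is unnecessary and not actually available (no such smallness is assumed in the proposition). Once all $K_i,L_i>0$ for $e<1/\sqrt{11}$, you have $|a_1|=K_1+K_2+2K_3+K_4+K_5>K_4+K_5\ge|K_4-K_5|=|b_1|$ and similarly $|b_2|>|a_2|$; hence $a_1b_2=|a_1|\,|b_2|>|b_1|\,|a_2|\ge a_2b_1$, giving $\beta>0$ with no appeal to the size of $d_j$. (The paper does not write this step out explicitly, but this is the clean way to close it.)
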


\begin{proof}
The matrix associated to the linearized equations \equ{EMlin} is
$$\begin{pmatrix}
    0 &0& \frac{2}{C_1} & 0 \\
    0 & 0 & 0& \frac{2}{C_2} \\
    a_1 & b_1 & 0 &0 \\
    a_2 & b_2 & 0 &0
\end{pmatrix}
$$
and the characteristic polynomial is given by 
\beq{seceq}
x^4 + \alpha x^2 + \beta = 0 
\eeq
with $\alpha$, $\beta$ as in \equ{ab2}. Therefore, the eigenvalues are given by
\beq{eigenvalues}
x^2 = {{-\alpha \pm \sqrt{\alpha^2 - 4\beta}}\over 2}\ .
\eeq
Straightforward computations give that $a_1<0$ and $b_2<0$ for $e<{1\over {\sqrt{11}}}$, which implies $\alpha>0$. 
We also notice that we can write 
\beq{511bis}
\alpha^2-4\beta = 4\left(\frac{a_1}{C_1} - \frac{b_2}{C_2}\right)^2 + 16\frac{a_2b_1}{C_1C_2} >0\ ,
\eeq
since $({a_1\over C_1}-{b_2\over C_2})^2>0$ and $a_2b_1>0$, being $a_2b_1=(L_4-L_5)(K_4-K_5)=(K_4-K_5)^2$; this implies that the right hand side of \eqref{eigenvalues} is always negative and, therefore, the eigenvalues are purely imaginary.
\end{proof}

From Propositions~\ref{pro:SO} and \ref{pro:SS} we notice an important difference between the spin-orbit and the spin-spin problems, since it occurs that the eigenvalues are purely imaginary, and hence the origin is linearly stable, for $e\lesssim 0.632$ for the spin-orbit problem, while we get a smaller eccentricity, say $e\lesssim 0.301$, for the spin-spin problem. 

\vskip.1in 

A similar analysis can be implemented for the other equilibrium points, say $(\varphi_1^0,\varphi_2^0)$, although for equilibria different from $(0,0)$, the sign of the coefficients 
\beqa{ab}
	a_1 &:=& -((K_1+K_2)\cos(\varphi_1^0) + 2K_3\cos(2\varphi_1^0) + K_4\cos(\varphi_1^0+\varphi_2^0) + K_5\cos(\varphi_1^0 - \varphi_2^0),\nonumber\\
	a_2 &:=& -(L_4\cos(\varphi_1^0 +\varphi_2^0) - L_5\cos(\varphi_1^0 - \varphi_2^0)),\nonumber\\
	b_1 &:=& -(K_4\cos(\varphi_1^0 + \varphi_2^0) - K_5\cos(\varphi_1^0 - \varphi_2^0)),\nonumber\\
	b_2 &:=& -((L_1 + L_2)\cos(\varphi_2^0) + 2L_3\cos(2\varphi_2^0) + L_4\cos(\varphi_1^0 + \varphi_2^0) + L_5\cos(   \varphi_1^0 - \varphi_2^0))\ ,\nonumber\\
\eeqa
and, hence, of $\alpha$, $\beta$ in \equ{511bis} depends on the quantities $d_j$, $q_j$, $M_j$ appearing in \equ{KL}. With this motivation, we give a concrete example in Figure~\ref{fig:res11}, which refers to the binary asteroid Patroclus and Menoetius, for which (in our units) $d_1=0.0482$, $d_2=0.0321$, $q_1=0.2226$, $q_2=0.1443$, $M_1=0.56$, $M_2=0.44$, $C_1=0.6$, $C_2=0.4$ (see, e.g., \cite{mis2021}). In Figure~\ref{fig:res11}, we plot the maximum of the real part of the eigenvalues of the Keplerian spin-spin problem (eq.s \equ{eq1} with $\chi=1$), 
using a color scale for the equilibria $(0,\pi)$ (left panel); the results for the equilibria $(\pi,0)$ and $(\pi,\pi)$ are very similar. We give the results in a range of values for the semimajor axis between 15 and 30, and the eccentricity between 0 and 0.3. The astronomical values of these quantities for Patroclus and Menoetius are  $a=18.2$, $e=0.02$. The figure shows that the linear stability is, in general, independent from the eccentricity, while the size of the maximum of the eigenvalues changes with the semimajor axis.

\begin{figure}[h]
	\center
	%%%%trim = left bottom right top
	\includegraphics[trim = 8cm  3cm 3.5cm  3cm, scale=0.13]{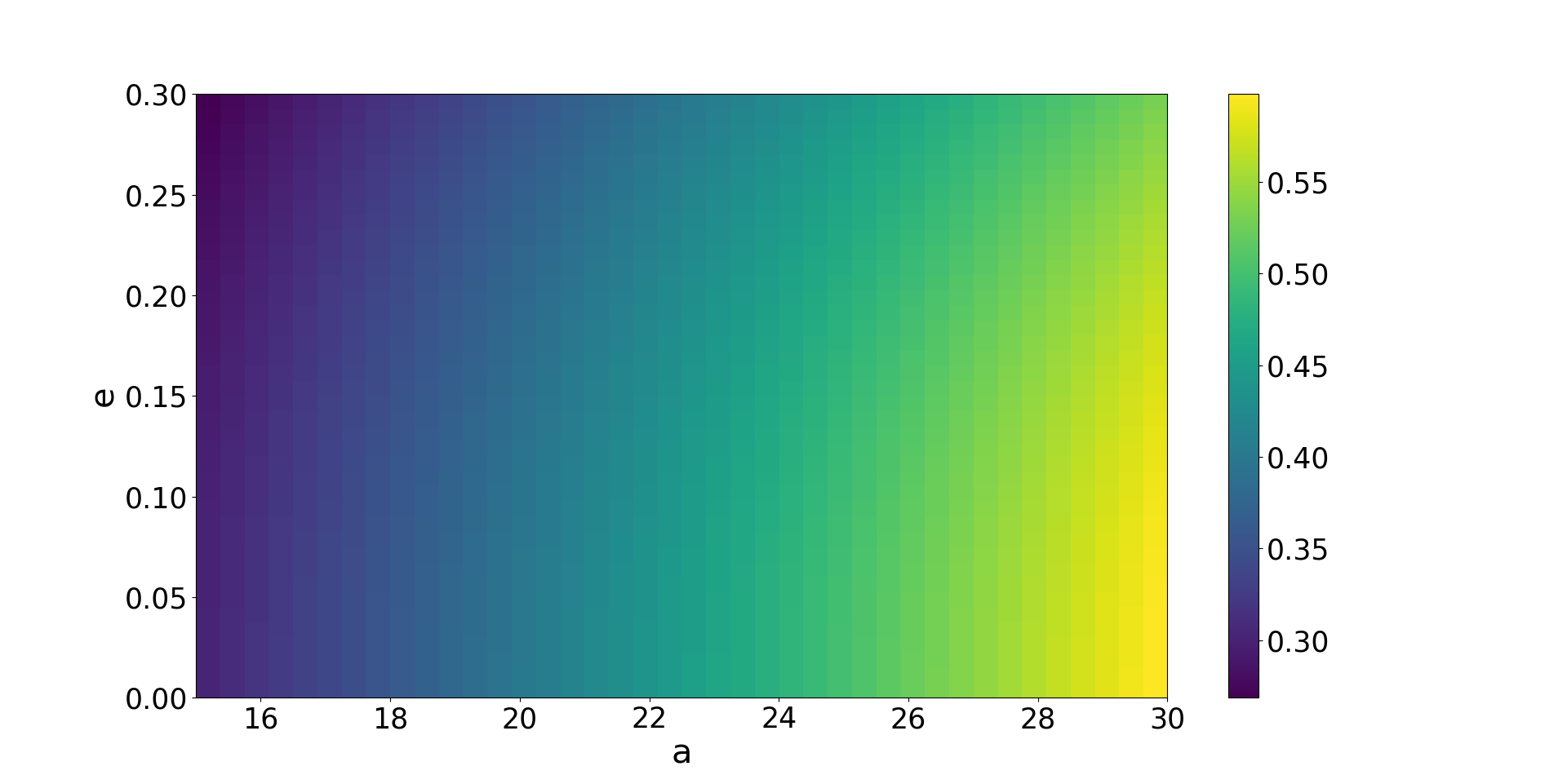}
	\includegraphics[trim = 5cm  3cm 2.5cm  3cm, scale=0.13]{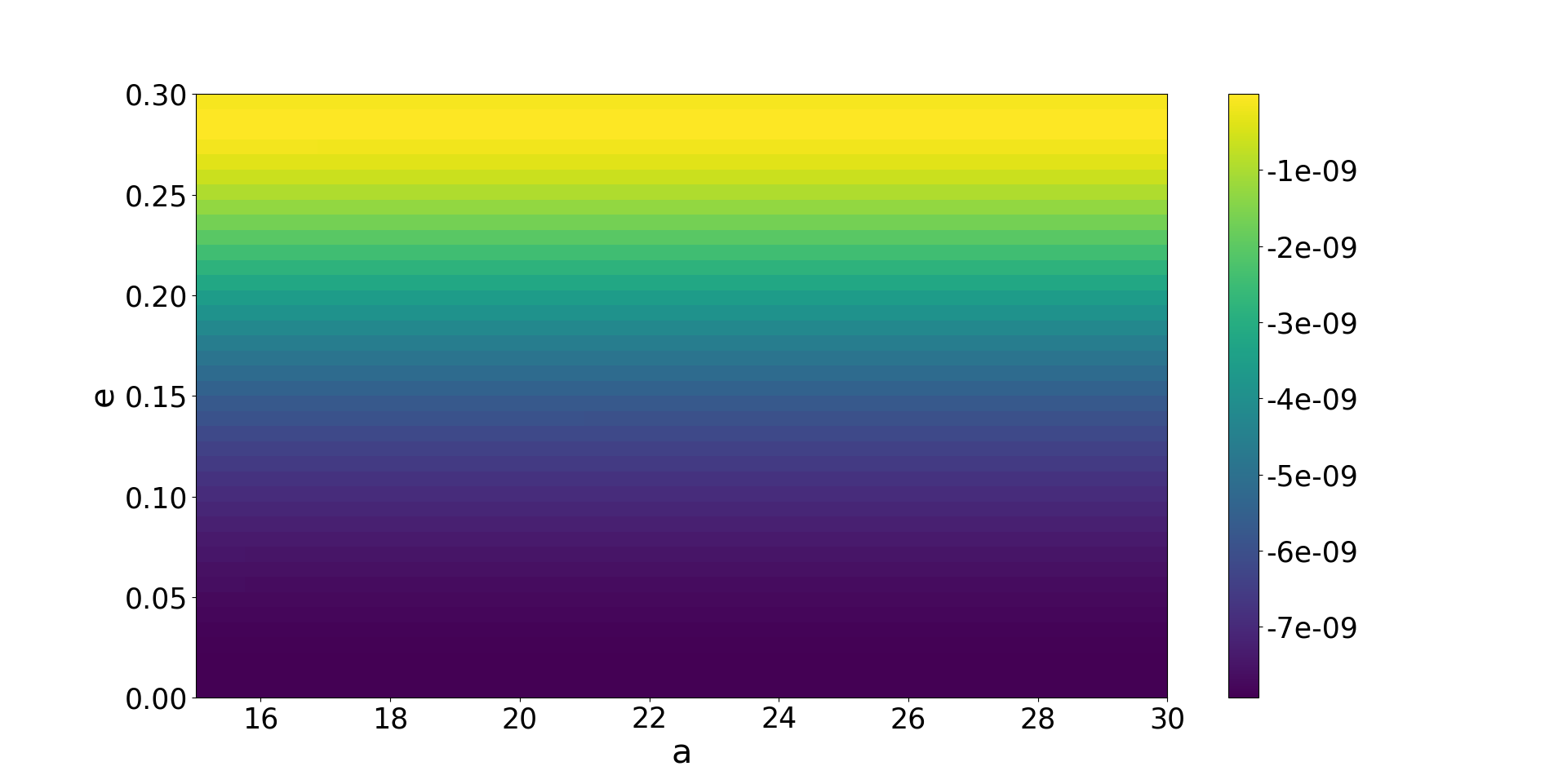}
	\includegraphics[trim = 5cm  3cm 7cm  3cm, scale=0.13]{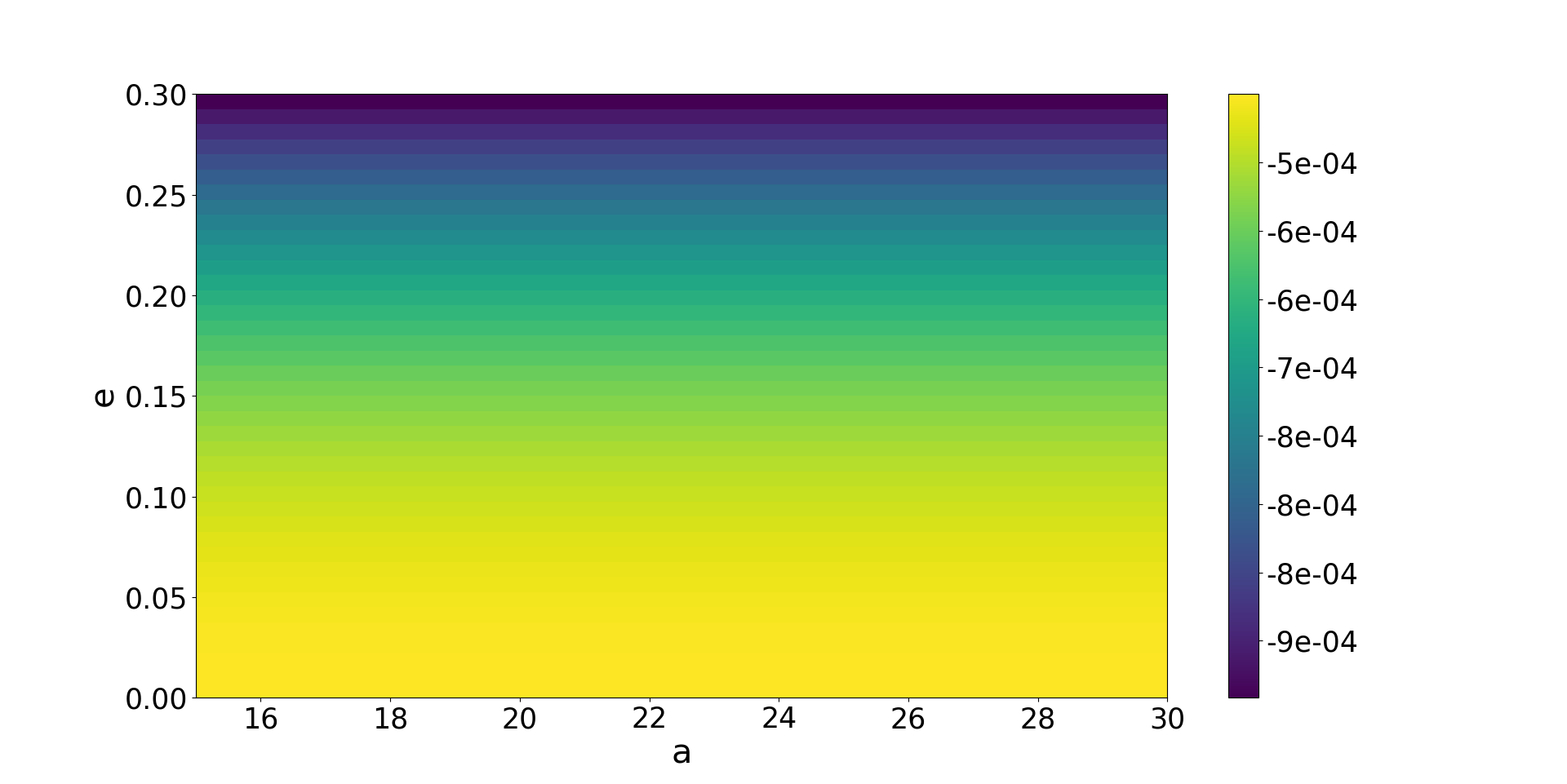}
	\caption{Keplerian spin-spin problem, maximum of the real part of the eigenvalues on a color scale for the conservative case, $(0,\pi)$ (left panel), single-dissipative averaged case, $(0,0)$ with $\delta_1=10^{-3}$ (middle panel), double-dissipative averaged case, $(0,0)$ with $\delta_1=10^{-3}$ and $\delta_2=2\cdot 10^{-3}$ (right panel). 
	}\label{fig:res11}
\end{figure}

\subsection{Dissipative case, (1:1,1:1) resonance}\label{diss11}
Let us consider the dissipative case with $\bar\gamma_1\neq 0$, $\bar\gamma_2\neq 0$ in \equ{eqall}.
The equilibrium points are at $J_1=J_2=0$ and at the solutions of
the system of equations 
\begin{align}
    K_1 \sin(\varphi_1) + \chi \{ K_2\sin(\varphi_1) + K_3\sin(2\varphi_1) + K_4\sin(\varphi_1 + \varphi_2) &+
    K_5\sin(\varphi_1 -\varphi_2)\}\nonumber\\ 
    &+ \bar\gamma_1\left( 1 -\bar\mu_1\right) = 0 \nonumber\\
    L_1 \sin(\varphi_2) + \chi \{ L_2\sin(\varphi_2) + L_3\sin(2\varphi_2) + L_4\sin(\varphi_1 + \varphi_2) &-
    L_5\sin(\varphi_1 -\varphi_2) \}\nonumber\\ 
    &+ \bar\gamma_2\left( 1 -\bar\mu_2\right) = 0\ . \label{eq:crit-dis-1:1}
\end{align}

\begin{proposition}\label{pro:SS}
Let $(\varphi_1^0,\varphi_2^0)$ be an equilibrium point, solution of the system of equations \equ{eq:crit-dis-1:1} for the spin-orbit problem with $\chi=0$. If 
$$
K_1\ \cos\varphi_1^0>0\ ,\qquad L_1\ \cos\varphi_2^0>0
$$
and if the dissipative constants $\bar\gamma_1$, $\bar\gamma_2$ satisfy 
\beq{g12}
0\leq\bar\gamma_1<(8K_1 C_1 \cos\varphi_1^o)^{1\over 2}\ ,\qquad 
0\leq\bar\gamma_2<(8L_1 C_2 \cos\varphi_2^o)^{1\over 2}\ ,
\eeq
then, the eigenvalues are complex conjugate with negative real part. 
\end{proposition}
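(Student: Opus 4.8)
The plan is to exploit the fact that when $\chi=0$ the system \equ{eqall} decouples into two structurally identical second‑order equations, one per body, each being a linearly damped pendulum with a constant forcing term. First I would write, for $k=1,2$ (with $Q_1=K_1$, $Q_2=L_1$), the $\chi=0$ equation
\[
\frac{C_k}{2}\ddot\varphi_k + Q_k\sin\varphi_k = -\bar\gamma_k\Bigl(\frac{\dot\varphi_k}{2}+1-\bar\mu_k\Bigr),
\]
and linearize about the equilibrium via $\varphi_k=\varphi_k^0+\xi_k$. The zeroth‑order terms cancel exactly because $(\varphi_1^0,\varphi_2^0)$ solves \equ{eq:crit-dis-1:1} with $\chi=0$, i.e. $Q_k\sin\varphi_k^0=-\bar\gamma_k(1-\bar\mu_k)$, leaving the homogeneous equation
\[
C_k\ddot\xi_k + \bar\gamma_k\dot\xi_k + 2Q_k\cos(\varphi_k^0)\,\xi_k = 0 .
\]

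Next I would read off the characteristic polynomial $C_k x^2 + \bar\gamma_k x + 2Q_k\cos\varphi_k^0=0$, whose roots are $x=\bigl(-\bar\gamma_k\pm\sqrt{\bar\gamma_k^2-8C_kQ_k\cos\varphi_k^0}\bigr)/(2C_k)$. The hypothesis $Q_k\cos\varphi_k^0>0$ together with the bound \equ{g12}, $\bar\gamma_k<(8Q_kC_k\cos\varphi_k^0)^{1/2}$, yields $\bar\gamma_k^2-8C_kQ_k\cos\varphi_k^0<0$, so the discriminant is negative and the two roots form a complex‑conjugate pair; since $C_k>0$ (from $C_1,C_2>0$, $C_1+C_2=1$) and $\bar\gamma_k\ge 0$ by \equ{gm}, their common real part $-\bar\gamma_k/(2C_k)$ is $\le 0$, and strictly negative whenever $\bar\gamma_k>0$. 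Finally, because with $\chi=0$ the off‑diagonal coupling terms in the linearization of \equ{eqall} disappear, the full $4\times 4$ linearized system is block‑diagonal, its spectrum is the union of the spectra of the two $2\times 2$ blocks, and hence all four eigenvalues are complex conjugate with negative real part, as asserted.

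There is no serious obstacle: this is a routine linearization. The points that need care are keeping track of the factor $\tfrac12$ in $\dot\varphi_k$ induced by the change of variables $\varphi_k=2(\theta_k-t)$ — this is what produces the bound $(8Q_kC_k\cos\varphi_k^0)^{1/2}$ rather than $(2Q_kC_k\cos\varphi_k^0)^{1/2}$ — verifying that the equilibrium relation exactly annihilates the inhomogeneous terms so the linearization reduces to a pure damped oscillator, and noting the mild abuse in the statement whereby the case $\bar\gamma_k=0$ (permitted by \equ{g12}) gives only nonpositive real part. One could equivalently invoke the Routh–Hurwitz criterion for the quadratic $C_k x^2+\bar\gamma_k x+2Q_k\cos\varphi_k^0$, but the explicit root formula is more transparent and directly exhibits the complex‑conjugate conclusion.
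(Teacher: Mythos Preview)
Your proposal is correct and follows essentially the same approach as the paper: linearize the decoupled $\chi=0$ equations at the equilibrium, obtain the factorized characteristic polynomial $\prod_k\bigl(x^2+\tfrac{\bar\gamma_k}{C_k}x+\tfrac{2Q_k}{C_k}\cos\varphi_k^0\bigr)=0$, and read off the roots. The only cosmetic difference is that the paper first writes the full $4\times 4$ linearized system (with the $\chi$-terms present) and then specializes to $\chi=0$, whereas you go straight to the two scalar second-order equations; your added remark on the boundary case $\bar\gamma_k=0$ is a nice touch not present in the paper.
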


\begin{proof}
The linearized equations of motion at the equilibrium points
$J_1=0$, $J_2=0$, $\varphi_1=\varphi_1^0$, and
$\varphi_2=\varphi_2^0$ are 
\beqa{eqdiss}
    \dot\xi_1 &=& \frac{2}{C_1}J_1 \label{eq:lin-sys-dis}\nonumber \\
    \dot\xi_2 &=& \frac{2}{C_2}J_2\nonumber \\
    \dot J_1 &=& -\left\{ (K_1 + \chi K_2)\cos(\varphi_1^0) +2\chi K_3\cos(2\varphi_1^0) + \chi K_4 \cos(\varphi_1^0 +
    \varphi_2^0) + \chi K_5\cos(\varphi_1^0-\varphi_2^0) \right\}\xi_1\nonumber \\
    &&\quad - \chi\left\{ K_4 \cos(\varphi_1^0 + \varphi_2^0) - K_5\cos(\varphi_1^0-\varphi_2^0) \right\} \xi_2  -
    \frac{\bar\gamma_1}{C_1} J_1\nonumber \\
    \dot J_2 &=& - \chi\left\{ L_4 \cos(\varphi_1^0 + \varphi_2^0) - L_5\cos(\varphi_1^0-\varphi_2^0) \right\} \xi_1 \\
    && \quad -\left\{ (L_1 + \chi L_2)\cos(\varphi_2^0) +2\chi L_3\cos(2\varphi_2^0) +
    \chi L_4 \cos(\varphi_1^0 + \varphi_2^0) +\chi L_5 \cos(\varphi_1^0-\varphi_2^0) \right\}\xi_2\nonumber \\ 
    &&\quad - \frac{\bar\gamma_2}{C_2}J_2. \nonumber
\eeqa
The characteristic polynomial and the eigenvalues associated to \equ{eqdiss} depend on the values $(\varphi_1^0,\varphi_2^0)$ and the constants $K_i$, $L_i$, $i=1,...,5$.
For $\chi=0$ (spin-orbit problem), the characteristic polynomial is given by
$$
\left(x^2+{\bar \gamma_1\over C_1} x  + {{2K_1}\over C_1} \cos(\varphi_1^0)\right)\left(x^2+{\bar \gamma_2\over C_2} x + {{2L_1}\over C_2} \cos(\varphi_2^0)\right)=0,
$$
whose solutions are
$$
x_{1,2}=-{\bar\gamma_1\over {2C_1}}\pm{1\over 2}\sqrt{\left({\bar\gamma_1\over C_1}\right)^2-8{K_1\over C_1}\cos(\varphi_1^0)}\ ,
$$
$$x_{3,4}=-{\bar\gamma_2\over {2C_2}}\pm{1\over 2}\sqrt{\left({\bar\gamma_2\over C_2}\right)^2-8{L_1\over C_2}\cos(\varphi_2^0)}\ .$$
If $\bar\gamma_1$, $\bar\gamma_2$ satisfy \equ{g12}, then the eigenvalues are complex conjugate with negative real part.
\end{proof}

Proposition \ref{pro:SS} shows that, under dissipation, the equilibrium is stable, provided the dissipative coefficients are small enough; indeed, the inequalities \equ{g12} are satisfied for most objects of the Solar system. 

A concrete example of the computation of the eigenvalues for the single and double dissipative cases for the pair Patroclus-Menoetius is given in Figure \ref{fig:res11} (middle and right panels), showing that the size of the maximum eigenvalue increases from the single to the double dissipative case. 
We remark that in these plots $\bar\gamma_1$, $\bar\gamma_2$ are chosen to satisfy the inequalities \equ{g12}. Beside, we notice that, contrary to the conservative case, in the two dissipative cases shown in Figure~\ref{fig:res11}, the linear stability varies with the eccentricity and it is essentially constant with respect to the semimajor axis. 
 
 \begin{remark}
Taking $\chi =1$ (spin-spin problem), we have that,
defining the coefficients $a_1$, $a_2$, $b_1$, $b_2$ as in \equ{ab}, 
the characteristic equation is given by
$$
\left( x^2 + \frac{\bar\gamma_1}{C_1}x - \frac{2a_1}{C_1}
\right)\left( x^2 + \frac{\bar\gamma_2}{C_2}x - \frac{2b_2}{C_2}
\right) - \frac{4}{C_1 C_2}a_2 b_1=0\ ,
$$
whose solutions are not elementary and depend on the values of the parameters. 
\end{remark}

\section{Equilibria and linear stability for the (3:2,3:2) and (1:1,3:2) Keplerian spin-spin resonances}\label{sec:res32}

To deal with the  (3:2,3:2) spin-spin resonance, we introduce the angles 
$$ 
\varphi_1 = 2\theta_1-3t, \qquad \varphi_2 = 2\theta_2 - 3t\ .
$$ 
The equations of motion considering just the terms that depend only on $\varphi_1$ and $\varphi_2$ and their combinations, namely the terms that will not vanish after averaging with respect to $t$, become
\beqa{diss32}
	\frac{C_1}{2}\ddot\varphi_1  + K_1\sin(\varphi_1) + \chi \left\{K_2 \sin(\varphi_1) + K_3\sin(2\varphi_1) + K_4\sin(\varphi_1 + \varphi_2) + K_5\sin(\varphi_1 - \varphi_2) \right\} \nonumber \\ = -\bar\gamma_1\left(\frac{\dot\varphi_1}{2} + \frac{3}{2} -\bar\mu_1\right) \nonumber \\
	\frac{C_2}{2}\ddot\varphi_2 + L_1\sin(\varphi_2) + \chi \left\{L_2 \sin(\varphi_1) + L_3\sin(2\varphi_2) + L_4\sin(\varphi_1 + \varphi_2) - L_5\sin(\varphi_1 - \varphi_2) \right\} \nonumber \\= -\bar\gamma_2\left(\frac{\dot\varphi_2}{2} + \frac{3}{2} -\bar\mu_2\right)\ ,\nonumber \\
\eeqa
where, from Appendix \ref{sub:potential}, we have 
\beqa{kappaelle}
K_1 &=& \frac{21d_1eGM_2}{4a^3},\qquad\qquad\qquad\qquad\quad\ L_1 = \frac{21d_2eGM_1}{4a^3}\nonumber\\ 
K_2 &=&\frac{675d_1eGM_2q_1}{112a^5M_1} +\frac{135d_1eGq_2}{16a^5}, \qquad L_2 = \frac{675d_2eGM_1q_2}{112a^5M_2} + \frac{135d_2eGq_1}{16a^5}\nonumber\\ 
K_3 &=& \frac{3825d_1^2e^2GM_2}{32a^5M_1}, \qquad\qquad\qquad\qquad L_3 = \frac{3825d_2^2e^2GM_1}{32a^5M_2}\nonumber\\  
K_4&=&\frac{5355d_1d_2e^2G}{32a^5},\qquad\qquad\qquad\qquad\ \ L_4 = \frac{5355d_1d_2e^2G}{32a^5}\nonumber\\ 
K_5&=& \frac{45d_1d_2e^2G}{16a^5} + \frac{9d_1d_2G}{16a^5}, \qquad\qquad\quad L_5 = \frac{45d_1d_2e^2G}{16a^5} + \frac{9d_1d_2G}{16a^5}\ .
\eeqa
Notice that all quantities $K_j$, $L_j$, $j=1,...,5$ are positive. 

The equations of motion in \equ{diss32} without the dissipation can be written as the first order system of equations 
\beqa{eqnew}
    \dot\varphi_1 &=& \frac{2}{C_1} J_1 \label{eq:aver-sys-conser-32-32}\nonumber\\
    \dot\varphi_2 &=& \frac{2}{C_2} J_2 \nonumber\\
    \dot J_1 &=& - K_1\sin(\varphi_1) - \chi \left\{K_2 \sin(\varphi_1) + K_3\sin(2\varphi_1) +  K_4\sin(\varphi_1 + \varphi_2) + K_5\sin(\varphi_1 - \varphi_2) \right\} \nonumber \\ 
    \dot J_2 &=& - L_1\sin(\varphi_2) - \chi \left\{L_2 \sin(\varphi_1) + L_3\sin(2\varphi_2) +  L_4\sin(\varphi_1 + \varphi_2) - L_5\sin(\varphi_1 - \varphi_2) \right\}\ .\nonumber\\
\eeqa
The equilibrium points are located at $J_1=J_2 = 0$, while the angles $(\varphi_1,\varphi_2)=(\varphi_1^0,\varphi_2^0)$ are the solutions of the system \equ{eq:crit-conservative} using the values of the constants $K_j$, $L_j$, $j=1,...,5$, given in \equ{kappaelle}. 

We analyze the character of the equilibria by linearizing the equations of motion \eqref{eq:aver-sys-conser-32-32} as in Section~\ref{sec:res11}. For example, considering the critical point at $(\varphi_1, \varphi_2, J_1, J_2) = (0, 0, 0, 0)$, the linear system is given by \equ{EMlin} and we have the following result. 

\begin{proposition}
For any value of the eccentricity, the eigenvalues of the linearized spin-orbit and spin-spin motions for the (3:2,3:2) resonance with, respectively, $\chi=0$ or $\chi=1$ in \equ{eqnew} are purely imaginary. 
\end{proposition}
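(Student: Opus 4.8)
The plan is to reproduce the linear-algebra computation carried out for the $(1:1,1:1)$ resonance in Section~\ref{sec:res11}, taking advantage of the fact --- noted right after \equ{kappaelle} --- that in the $(3:2,3:2)$ case \emph{all} the constants $K_j$, $L_j$, $j=1,\dots,5$, are nonnegative for every eccentricity, whereas in the $(1:1,1:1)$ case the signs of $K_1,L_1$ and of $K_3,L_3,K_4,L_4$ depended on $e$. As observed just before the statement, the linearization of \equ{eqnew} at the origin $(\varphi_1,\varphi_2,J_1,J_2)=(0,0,0,0)$ is again the system \equ{EMlin}, now with the constants read off from \equ{kappaelle}.

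For $\chi=0$ the system \equ{EMlin} decouples into the two blocks \equ{SOk} with $Q_1=K_1$, $Q_2=L_1$, and the characteristic equation $\left(x^2+\tfrac{2K_1}{C_1}\right)\left(x^2+\tfrac{2L_1}{C_2}\right)=0$ has purely imaginary roots because $K_1,L_1\ge 0$ for all $e$ (as in Proposition~\ref{pro:SO}, the only change being that the condition $e<\sqrt{2/5}$ is no longer needed). For $\chi=1$ I would invoke the characteristic polynomial $x^4+\alpha x^2+\beta=0$ of \equ{seceq}, with $\alpha,\beta$ as in \equ{ab2} and $a_i,b_i$ as in \equ{ab12}. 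First, positivity of all the $K_j,L_j$ forces $a_1<0$ and $b_2<0$, hence $\alpha>0$. Second, since \equ{kappaelle} gives $K_4=L_4$ and $K_5=L_5$, one has $a_2=b_1=-(K_4-K_5)$, so $a_2b_1=(K_4-K_5)^2\ge 0$, and the identity \equ{511bis} then yields $\alpha^2-4\beta\ge 0$, so that the two values of $x^2$ in \equ{eigenvalues} are real. Third, they are nonpositive because $\beta\ge 0$: indeed $|a_1|=K_1+K_2+2K_3+K_4+K_5\ge K_4+K_5$ and $|b_2|=L_1+L_2+2L_3+L_4+L_5\ge L_4+L_5=K_4+K_5$, whence $a_1b_2=|a_1|\,|b_2|\ge (K_4+K_5)^2\ge (K_4-K_5)^2=a_2b_1$ and thus $\beta=\tfrac{4}{C_1C_2}(a_1b_2-a_2b_1)\ge 0$. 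Therefore $x^2=\tfrac12\left(-\alpha\pm\sqrt{\alpha^2-4\beta}\right)\le 0$ and all four eigenvalues are purely imaginary.

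The only step that is not a mechanical transcription of Section~\ref{sec:res11} is the verification that $\beta\ge 0$ (equivalently, that the ``$+$'' branch of \equ{eigenvalues} is nonpositive); I expect this to be the main point, and it hinges on the two identities $K_4=L_4$, $K_5=L_5$ coming from \equ{kappaelle} together with the unconditional positivity of all the coefficients there. A minor point worth recording is the borderline case $e=0$, in which every $K_j,L_j$ except $K_5=L_5$ vanishes: then $\alpha>0$ but $\beta=0$ for $\chi=1$ (and $K_1=L_1=0$ for $\chi=0$), so one obtains a double eigenvalue $0$, which is still purely imaginary and consistent with the statement.
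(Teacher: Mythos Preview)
Your proof is correct and follows the same route as the paper: reduce to the secular equation \equ{seceq} via the notation \equ{ab12}--\equ{ab2}, and check the signs of $\alpha$ and $\alpha^2-4\beta$ using the positivity of all $K_j,L_j$ in \equ{kappaelle}. Your explicit verification that $\beta\ge 0$ (from $|a_1|\,|b_2|\ge (K_4+K_5)^2\ge (K_4-K_5)^2=a_2b_1$) in fact supplies a step the paper's terse argument leaves implicit, since $\alpha>0$ and $\alpha^2-4\beta>0$ alone do not force the ``$+$'' branch of \equ{eigenvalues} to be nonpositive without knowing $\beta\ge 0$.
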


\begin{proof}
For the spin-orbit problem, $\chi=0$, and the spin-spin problem, $\chi =1$, we introduce the same notation as in \equ{ab12}, which leads to the secular equation \equ{seceq} with the same $\alpha$, $\beta$ as in \equ{ab2}. 
Since $\alpha>0$ and $\alpha^2-4\beta>0$ (see \equ{511bis}), then the eigenvalues are purely imaginary, independently of the value of the eccentricity. 
\end{proof}

An example of the computation for the maximum of the real part of the eigenvalues associated to the (3:2,3:2) resonance, for different values of inclination and eccentricity, is given in Figure~\ref{fig:res1132} (left panel). 

\begin{figure}[h]
	\center
	%trim left bottom right top
	\includegraphics[trim = 5cm  3cm 2cm  3cm, scale=0.15]{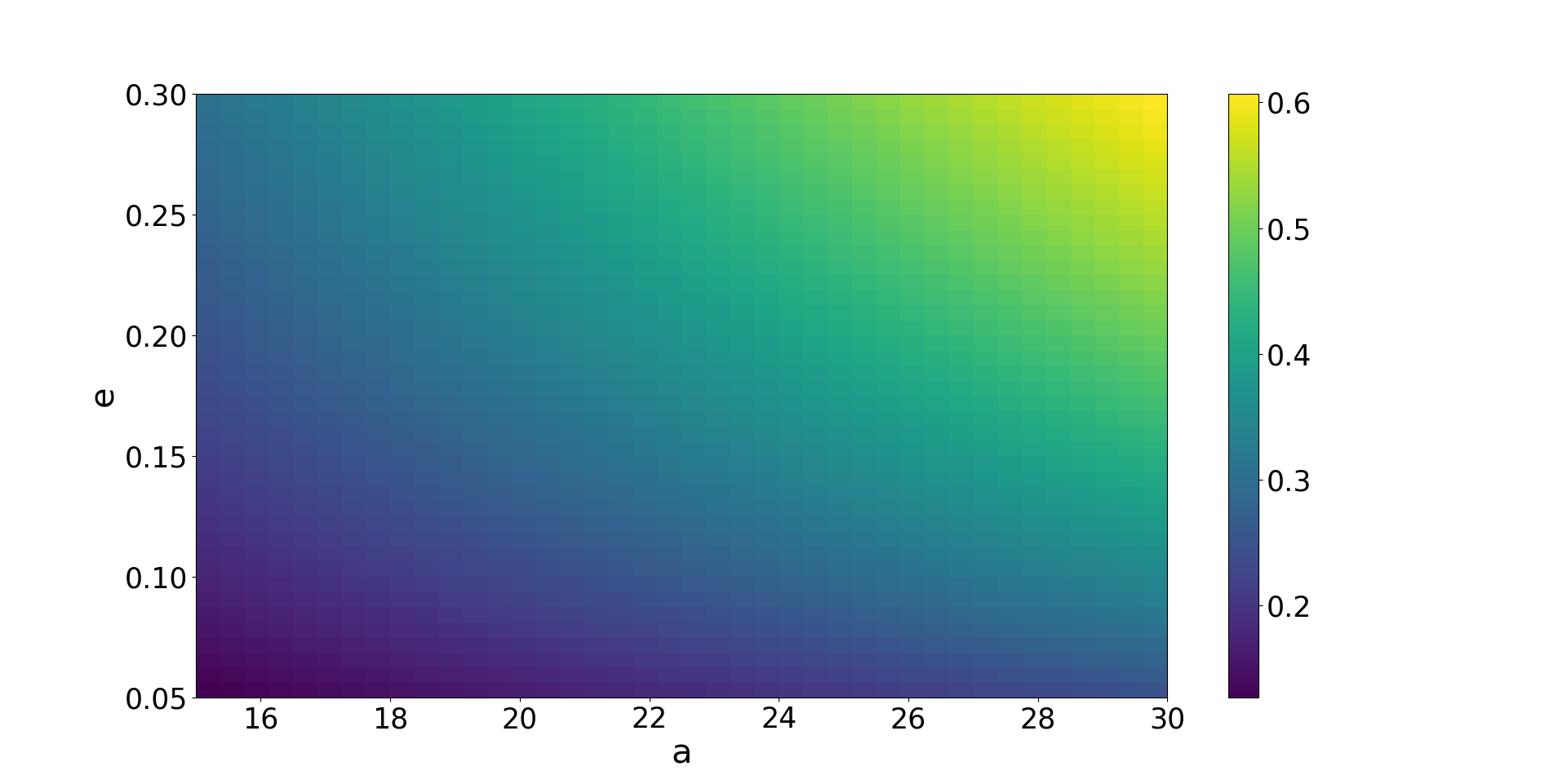}
	\includegraphics[trim = 5cm  3cm 5cm  3cm, scale=0.15]{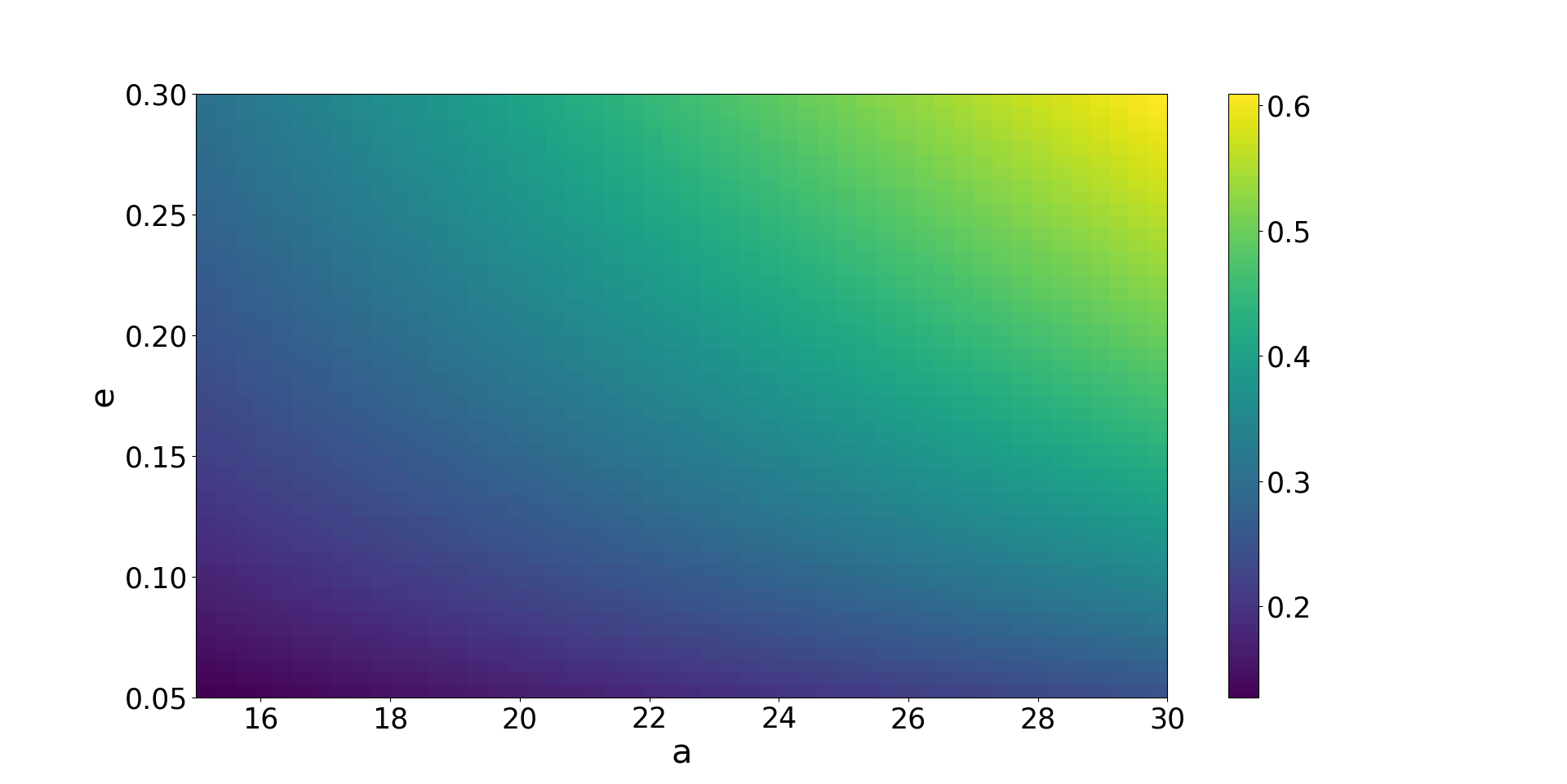}
	\caption{Keplerian spin-spin problem, maximum of the real part of the eigenvalues on a color scale for the conservative case, $(0,\pi)$: (3:2,3:2) resonance (left panel), (1:1,3:2) resonance (right panel). 
	}\label{fig:res1132}
\end{figure}

To deal with the (1:1,3:2) spin-spin resonances, we introduce the angles $$ \varphi_1 = 2\theta_1-2t, \qquad \varphi_2 = 2\theta_2 - 3t\ .$$
The equations of motion considering just the terms that depend only on $\varphi_1$ and $\varphi_2$, and their combinations, namely the terms that will not vanish after averaging with respect to time, become
\begin{align*}
	\frac{C_1}{2}\ddot\varphi_1 + K_1 \sin(\varphi_1) + \chi \{ K_2\sin(\varphi_1) + K_3\sin(2\varphi_1) + K_4\sin(\varphi_1 + \varphi_2) + K_5\sin(\varphi_1 - \varphi_2) \} \\= -\bar\gamma_1\left(\frac{\dot\varphi_1}{2} + 1 -\bar\mu_1\right) \\
	\frac{C_2}{2}\ddot\varphi_2 + L_1 \sin(\varphi_2) + \chi \{ L_2\sin(\varphi_2) + L_3\sin(2\varphi_2) + L_4\sin(\varphi_1 + \varphi_2)  - L_5\sin(\varphi_1 -\varphi_2) \} \\ = -\bar\gamma_2\left(\frac{\dot\varphi_2}{2} + \frac{3}{2} -\bar\mu_2\right)\ ,
\end{align*}
where, from Appendix \eqref{sub:potential},  we have  
\beqano
K_1 &=& \frac{3d_1GM_2}{2a^3}(1-{5\over 2}e^2),\qquad\qquad\ \ \ L_1 = \frac{21d_2eGM_1}{4a^3}\nonumber\\
K_2 &=&\frac{75d_1GM_2q_1}{56a^5M_1} + \frac{75d_1e^2GM_2q_1}{56a^5M_1} + \frac{15d_1Gq_2}{8a^5} + \frac{15d_1e^2Gq_2}{8a^5}\nonumber\\
L_2 &=& \frac{135d_2eGq_1}{16a^5} + \frac{675d_2eGM_1q_2}{112a^5M_2}, \nonumber\\
K_3 &=& \frac{75d_1^2GM_2}{16a^5M_1}(1-11e^2), \qquad\qquad\ \ L_3 = \frac{3825d_2^2e^2GM_1}{32a^5M_2}\nonumber\\
K_4 &=& \frac{1365d_1d_2eG}{32a^5}, \qquad\qquad\qquad\qquad L_4 = \frac{1365d_1d_2eG}{32a^5}\nonumber\\
K_5 &=&\frac{45d_1d_2eG}{32a^5},\qquad\qquad\qquad\qquad\quad L_5 = \frac{45d_1d_2eG}{32a^5}\ .
\eeqano 
Notice that $K_1>0$ for $e<\sqrt{2\over 5}$ and $K_3>0$ for $e<{1\over {\sqrt{11}}}$, while all other coefficients are positive.

The equilibrium points are located at $J_1=J_2 = 0$, while the angles are the solutions of the system \equ{eq:crit-conservative}. 

Considering  the critical point at $(\varphi_1, \varphi_2, J_1, J_2) = (0, 0, 0, 0)$, the linear system is 
\beqa{eq2}
    \dot\xi_1 &=& \frac{2}{C_1}J_1 \nonumber\\
    \dot\xi_2 &=& \frac{2}{C_2}J_2 \nonumber\\
    \dot J_1 &=& -\left\{ (K_1 + \chi K_2) +2\chi K_3 + \chi K_4 + \chi K_5 \right\}\xi_1 - \chi\left\{ K_4  - K_5 \right\} \xi_2 \nonumber\\
    \dot J_2 &=& - \chi\left\{ L_4 - L_5 \right\} \xi_1  -\left\{ (L_1 + \chi L_2) + 2\chi L_3 +\chi L_4 + \chi L_5  \right\}\xi_2\ . 
\eeqa
Then, we have the following results. 

\begin{proposition}
For $e<\sqrt{2\over 5}$, the eigenvalues of the linearized Keplerian spin-orbit motion for the $(1:1, 3:2)$ resonance with $\chi=0$ in \equ{eq2} are purely imaginary.
\end{proposition}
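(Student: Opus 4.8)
The plan is to follow the proof of Proposition~\ref{pro:SO} essentially verbatim. Setting $\chi=0$ in \equ{eq2} decouples the four-dimensional linearization into two independent planar subsystems of the form
$$
\dot\xi_k=\frac{2}{C_k}J_k,\qquad \dot J_k=-Q_k\,\xi_k,\qquad k=1,2,
$$
with $Q_1=K_1$ and $Q_2=L_1$. The characteristic polynomial therefore factors as
$$
\left(x^2+\frac{2Q_1}{C_1}\right)\left(x^2+\frac{2Q_2}{C_2}\right)=0,
$$
so all four eigenvalues are purely imaginary precisely when $Q_1>0$ and $Q_2>0$, i.e.\ when $K_1>0$ and $L_1>0$.

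It then remains only to read off the signs from the explicit expressions for the $(1{:}1,3{:}2)$ resonance listed above. One has $L_1=\frac{21 d_2 e\, G M_1}{4a^3}>0$ for every $e>0$ (the coefficient carries an overall factor $e$, reflecting that the $3{:}2$ component is eccentricity-driven), while $K_1=\frac{3 d_1 G M_2}{2a^3}\bigl(1-\tfrac52 e^2\bigr)$ is positive exactly for $e<\sqrt{2/5}$. Hence both $Q_j$ are strictly positive in the stated eccentricity range, the characteristic equation has no real or complex-with-nonzero-real-part roots, and the eigenvalues are purely imaginary, as claimed.

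There is essentially no obstacle here: the decoupling at $\chi=0$ reduces the statement to the sign analysis already recorded when the constants $K_j$, $L_j$ were introduced. The only point worth flagging is that, in contrast with the $(1{:}1,1{:}1)$ case of Proposition~\ref{pro:SO}, the lower bound $e>0$ is tacitly needed to keep $L_1\neq 0$ (at $e=0$ the $3{:}2$ subsystem degenerates to a double zero eigenvalue, so the linearization is only marginally stable in a degenerate way), and the genuinely restrictive constraint remains the upper bound $e<\sqrt{2/5}$ forced by the sign of $K_1$.
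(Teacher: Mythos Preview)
Your proof is correct and follows essentially the same approach as the paper. The paper does not spell out a separate argument for this proposition but simply remarks that the proof follows from the sign analysis of the coefficients, exactly as you do by reducing to the decoupled form of Proposition~\ref{pro:SO} and reading off $K_1>0$ for $e<\sqrt{2/5}$ and $L_1>0$ for $e>0$; your observation about the degeneracy at $e=0$ is a useful addendum not made explicit in the paper.
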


\begin{proposition}
For $e<{1\over {\sqrt{11}}}$, the eigenvalues of the linearized Keplerian spin-spin motion for the $(1:1,3:2)$ resonance with $\chi=1$ in \equ{eq2} are purely imaginary. 
\end{proposition}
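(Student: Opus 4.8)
The plan is to reduce the claim, exactly as in the analysis of the $(1:1,1:1)$ and $(3:2,3:2)$ resonances, to a sign analysis of the quadratic-in-$x^2$ secular equation \equ{seceq}. Setting $\chi=1$ in \equ{eq2}, the linearized system has the block form with matrix
\[
\begin{pmatrix} 0 & 0 & 2/C_1 & 0 \\ 0 & 0 & 0 & 2/C_2 \\ a_1 & b_1 & 0 & 0 \\ a_2 & b_2 & 0 & 0 \end{pmatrix},
\]
where $a_1,a_2,b_1,b_2$ are as in \equ{ab12} but now evaluated with the $(1:1,3:2)$ values of the constants $K_j,L_j$ listed above. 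Hence the characteristic polynomial is again $x^4+\alpha x^2+\beta=0$ with $\alpha,\beta$ as in \equ{ab2}, and by \equ{eigenvalues} the eigenvalues are purely imaginary provided
\[
\alpha>0,\qquad \alpha^2-4\beta\ge 0,\qquad \beta\ge 0,
\]
since then $x^2=\tfrac12\big(-\alpha\pm\sqrt{\alpha^2-4\beta}\big)\le 0$.

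The first two conditions follow along the lines of Proposition~\ref{pro:SS}. Looking at the $(1:1,3:2)$ table, all the $K_j$ and $L_j$ are non-negative in the relevant range: the only ones carrying a sign are $K_1$, positive for $e<\sqrt{2/5}$, and $K_3$, positive for $e<1/\sqrt{11}$, while $L_1,L_2>0$ for $e>0$ and $K_2,K_4,K_5,L_3,L_4,L_5\ge 0$ outright. Hence $a_1=-(K_1+K_2+2K_3+K_4+K_5)<0$ and $b_2=-(L_1+L_2+2L_3+L_4+L_5)<0$, so $\alpha=-2(a_1/C_1+b_2/C_2)>0$. For the discriminant I would reuse the algebraic identity \equ{511bis},
\[
\alpha^2-4\beta=4\Big(\tfrac{a_1}{C_1}-\tfrac{b_2}{C_2}\Big)^2+\tfrac{16}{C_1C_2}\,a_2b_1,
\]
together with the structural feature of the $(1:1,3:2)$ table that $K_4=L_4$ and $K_5=L_5$; this forces $a_2=b_1=-(K_4-K_5)$, hence $a_2b_1=(K_4-K_5)^2\ge 0$ and $\alpha^2-4\beta\ge 0$.

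The genuinely new point, not needed in the cited propositions, is $\beta=\tfrac{4}{C_1C_2}(a_1b_2-a_2b_1)\ge 0$. Here I would again use $K_4=L_4$, $K_5=L_5$: expanding $a_1b_2=(K_1+K_2+2K_3+K_4+K_5)(L_1+L_2+2L_3+L_4+L_5)$ into non-negative summands, the four terms arising from $\{K_4,K_5\}\times\{L_4,L_5\}$ add up to $(K_4+K_5)^2$, so $a_1b_2\ge(K_4+K_5)^2\ge(K_4-K_5)^2=a_2b_1$. Equivalently, and more transparently, the lower-left block $A=\begin{pmatrix}a_1&b_1\\a_2&b_2\end{pmatrix}$ is symmetric (because $a_2=b_1$) and, for $0<e<1/\sqrt{11}$, negative definite — its trace $a_1+b_2$ is negative and its determinant $a_1b_2-b_1^2$ is positive by the estimate just made — so $\operatorname{diag}(2/C_1,2/C_2)\,A$ is similar to a symmetric negative-definite matrix, whence its eigenvalues (which are precisely the values $x^2$) are real and negative and the eigenvalues $x$ are purely imaginary. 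I expect the determinant bound $a_1b_2>b_1^2$ (equivalently $\beta>0$) to be the step requiring the most care, since it is exactly where the special relations $K_4=L_4$, $K_5=L_5$ and the non-negativity of all remaining coefficients for $e<1/\sqrt{11}$ are used; one should also read the statement for $e>0$, since at $e=0$ the $3\!:\!2$ block degenerates and $\beta=0$.
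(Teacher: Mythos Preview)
Your argument is correct and follows the same route as the paper: reduce to the biquadratic \equ{seceq}, use \equ{ab12}--\equ{ab2} and the identity \equ{511bis}, and read off the signs of the $(1:1,3:2)$ coefficients. The paper's own proof is a single line (``the quantities $a_1,a_2,b_1,b_2$ are negative for $e<1/\sqrt{11}$''), so your explicit verification that $\beta\ge 0$ via $a_1b_2\ge (K_4+K_5)^2\ge (K_4-K_5)^2=a_2b_1$ actually fills a step the paper leaves implicit, and your remark on the $e=0$ degeneracy is apt.
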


The proof follows from the observation that as in \equ{ab12} the quantities $a_1$, $a_2$, $b_1$, $b_2$ are negative for $e<{1\over {\sqrt{11}}}$. 

An example of the computation for the maximum of the real part of the eigenvalues associated to the (1:1,3:2) resonance, for different values of inclination and eccentricity, is given in Figure~\ref{fig:res1132} (right panel). The results are very similar to those of the (3:2, 3:2) resonance.

\section{Comparison of different dynamical models}\label{sec:comparison}
The analysis of the equilibria and their linear stability of 
Sections \ref{sec:res11} and \ref{sec:res32}
are based on the Keplerian spin-spin problem, i.e. assuming that the orbit is Keplerian. Within such models, it is interesting to compare the behavior of the Keplerian spin-orbit problem
with $\chi=0$ and the Keplerian spin-spin problem in which also the
contribution of $V_4$ is considered; such comparison provides the coupling between the
rotational motions of the two ellipsoids (see Section \ref{sec:71}). 
Furthermore, releasing the assumption of Keplerian orbit, we will study the structure of the phase space in the full problem, i.e. when the coupling perturbs also the orbit (see Section \ref{sec:72}).

\subsection{A comparison between the spin-orbit and spin-spin problems}\label{sec:71}
We briefly provide some results on the comparison between 
the spin-orbit and the spin-spin problems. 
Figure \ref{fig:soss11} gives an example for the case of the binary asteroid Patroclus-Menoetius in the
conservative case (first and second row); the upper figures show the Poincar\'e maps in
components $(\theta_1,p_1)$ (left panel) and $(\theta_2,p_2)$ (right panel) for
the decoupled case $V=V_2$, while the panels in the second row show the results when the
coupling is switched on, namely $V=V_2+V_4$. In this sample case, the coupling between the rotational
motions provokes small differences in the plots, mainly visible within the
librational regions. For the Keplerian spin-spin problem in the mixed case with
dissipation only in the equations for $\E_1$, namely $\bar\gamma_1=10^{-3}$ and $\bar\gamma_2=0$ (Figure \ref{fig:soss11}, third row), we
notice that the plots for $(\theta_1,p_1)$ show an attractor toward the 1:1
resonance. Although the dissipation is not acting on $\E_2$, we see that the
second ellipsoid is also affected by the dissipation due to the coupling
obtained taking the potential $V=V_2+V_4$; this conclusion can be inferred by the small drift, typical of the dissipative plots. 
Finally, we provide an example for the dissipative case, with the two bodies both attracted to the $(1:1,1:1)$ 
resonance, as shown in the last row of Figure \ref{fig:soss11}.

\begin{figure}[h]
	\center
	\includegraphics[height=5.0cm]{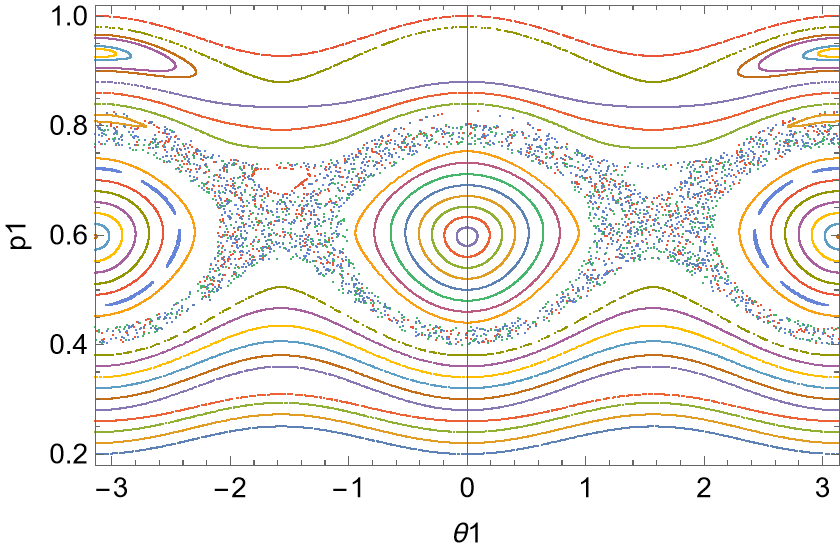}
	\includegraphics[height=5.0cm]{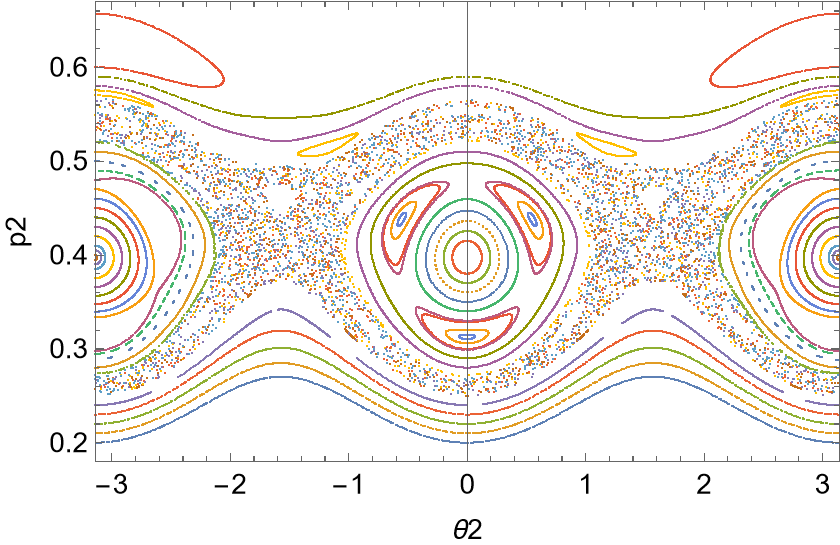}\\
	\includegraphics[height=5.0cm]{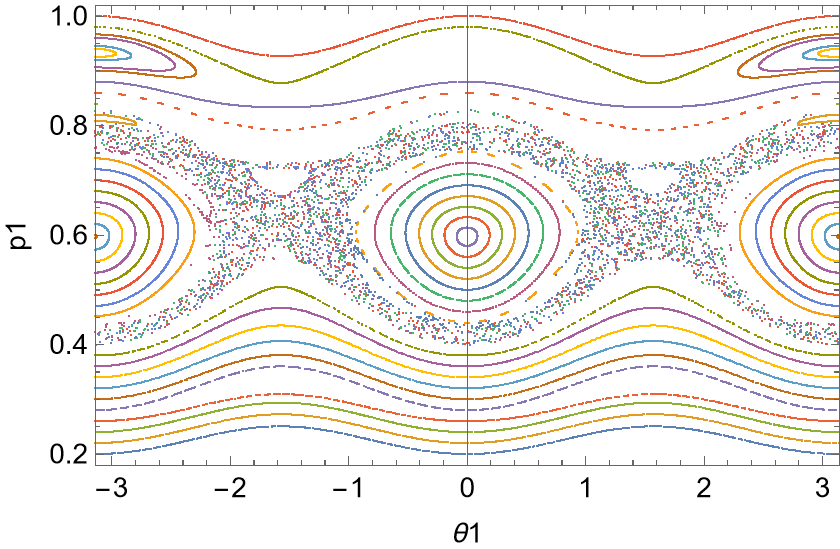}
	\includegraphics[height=5.0cm]{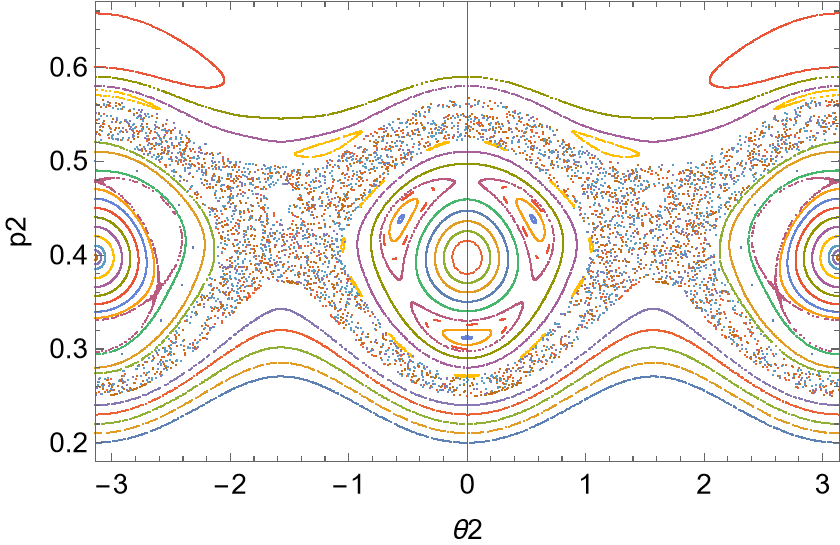}\\
	\includegraphics[height=5.0cm]{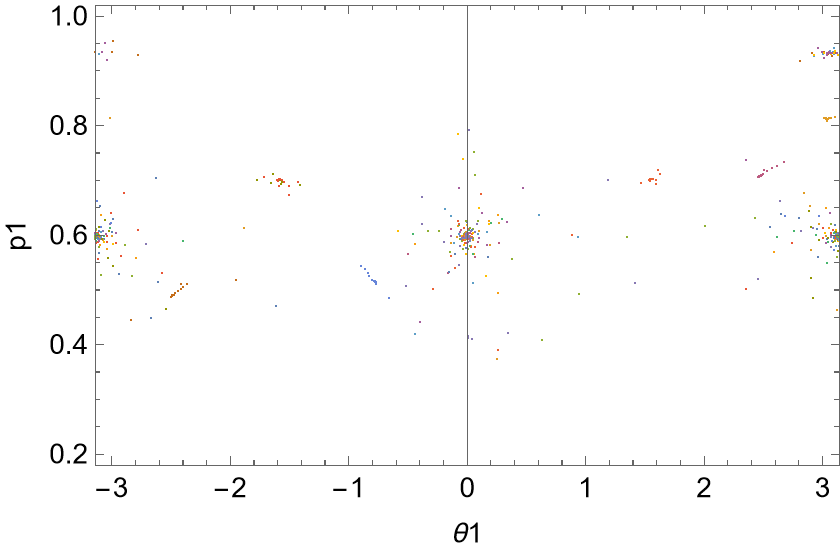}
	\includegraphics[height=5.0cm]{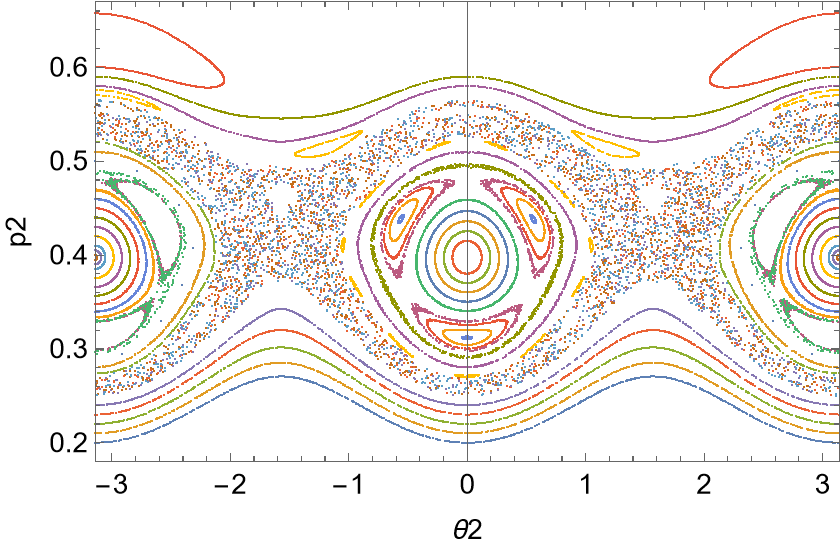}\\
	\includegraphics[height=5.0cm]{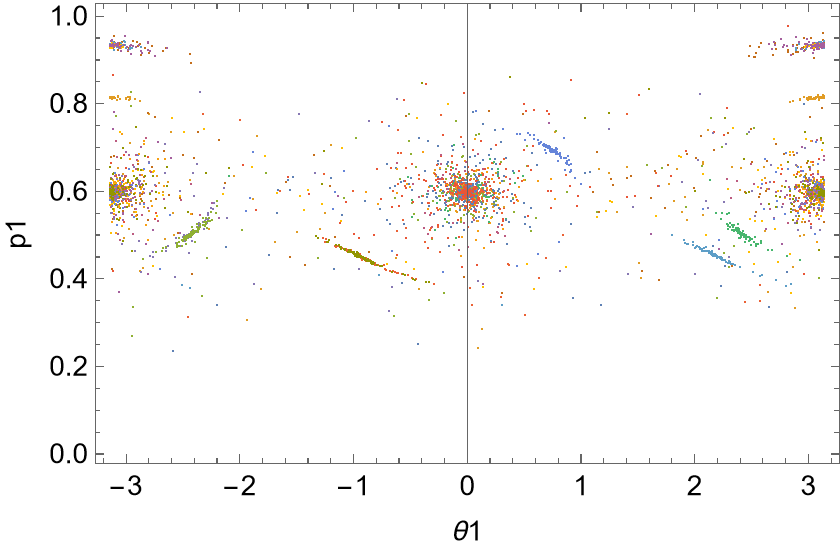}
	\includegraphics[height=5.0cm]{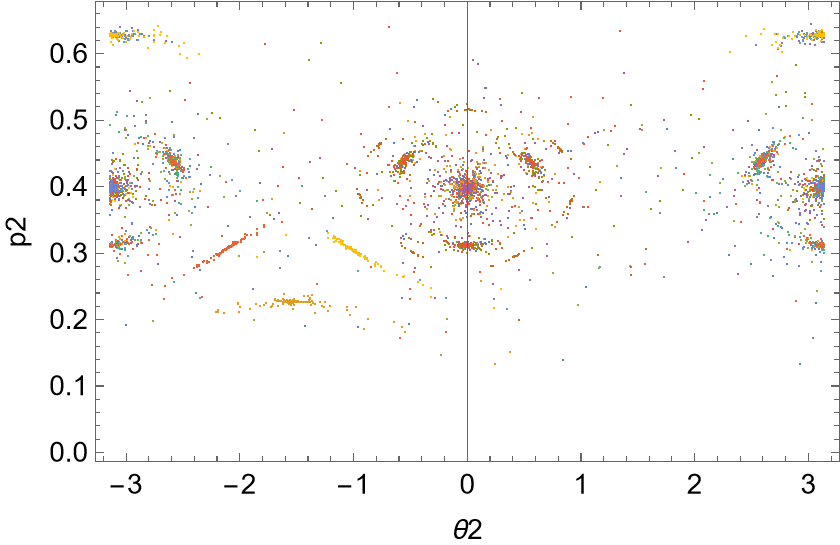}\\
	\caption{Poincar\'e sections projected on $(\theta_1,p_1)$, $(\theta_2,p_2)$ for Patroclus-Menoetius for the Keplerian
	spin-orbit problem (upper panels) and the Keplerian spin-spin problem
	(second row). Mixed case with $\bar\gamma_1=10^{-3}$ (third row).
	Dissipative case with $\bar\gamma_1=10^{-4}$ and $\bar\gamma_1=10^{-4}$
	(last row). }\label{fig:soss11}
\end{figure}

\subsection{Numerical investigation of the full spin-spin problem}\label{sec:72}
In this Section, we proceed to a numerical study of the full spin-spin problem.
For this reason we compare the solutions of the Keplerian case  determined by \equ{orbital_eqs_kep} with the corresponding solutions of the full problem given
by \equ{orbital_eqs} and \equ{spin-spin_V}. To compare the solutions in presence 
of dissipation, we add the right hand sides of \equ{diss} to the corresponding equations in \equ{spin-spin_V}, so that the dissipation 
essentially affects the momenta $p_j$. 

In the following, several issues need to
be addressed. First, we notice that in the full problem also the orbital parameters, 
i.e. the semi-major axis $a$ and the eccentricity $e$, are subject to change with time.  
In presence of dissipation (see \equ{diss}) it is therefore also necessary to 
calculate $a=a(t)$ at each integration step. Since the orbital part of the
dynamics is given in polar coordinates, we require a simple relation
between the coordinates $(r,f,p_r,p_f)$ and the time dependent parameter $a$.
To this end, starting with the definition of the orbital energy 
\beqno
-\frac{G M}{2a} = \frac{\dot r^2 + r^2\dot f^2}{2} - \frac{GM}{r} \ ,
\eeqno
the parameter $a$ is given in terms of $r$, $p_r$, and $p_f$ by 
\beq{orb_a}
a = -\frac{a_0^3 M_1^2 M_2^2 r^2}{p_f^2-2a_0^3M_1^2M_2^2r+p_r^2r^2} \ ,
\eeq
where we have used $p_r=\mu \dot r$, $p_f=\mu r^2\dot f$, together with $\mu=M_1
M_2$.  We notice, that the initial value of the semi-major axis $a_0=a(0)$
enters the relation due to our choice of units, i.e. from the relation $G M =
a_0^3$ that needs to be kept fixed. 
Second, we require a suitable section condition to compare the results of the (fixed orbit) Keplerian problem with the full problem, where the orbit is subject to a 
change with time. We notice that in the Keplerian case the section condition is 
taken to be $t\mod 2\pi=0$, which means to take values of the solution state whenever 
the bodies complete one full$\bar\gamma_1=10^{-3}$ revolution period. However, in the full problem, 
since $a=a(t)$, the orbital period is not constant anymore and an alternative section 
condition is needed. Setting the initial true anomaly $f(0)=0$, at time zero, the 
bodies start at their pericenters, and return to their pericenters after a time $2\pi$ 
in the Keplerian case.  This directly translates into the condition $r\sin(f)=0$ 
(crossing from below) in the full problem. Thus, surfaces of section in the full
problem are taken whenever the bodies are located at their pericenters, as it is
done in the case of fixed Keplerian orbits. 

As first test, we take the same initial 
conditions and parametes of the Keplerian case in the conservative setting, but now based on the dynamics of the full problem. 
With a higher number of degrees of freedom, the integration becomes more computationally complex; to this end, we use a Runge-Kutta, variable 
stepsize integration method. To check the accuracy, we control the preservation of
the Hamiltonian of the full problem, which turns out to be conserved up to $10^{-10}$ over the full integration time.  The results of this first simulation are shown in the
upper row of Figure \ref{fig:soss11B}, which should be compared with the second
row of Figure \ref{fig:soss11}.  While higher order resonant islands appear in
the Keplerian case, most of them are destroyed in the full problem. Moreover, additional
oscillations in the projections of the solutions to the phase portraits are visible
in Figure \ref{fig:soss11B} first row, that are not present in the Keplerian case.  We
conjecture that they are stemming from the variations of the orbital parameters
of the full problem. Despite these main differences, we argue that the dynamics 
near the main libration centers aligns remarkably well with the expected location 
and geometry when comparing Figures \ref{fig:soss11} and \ref{fig:soss11B}.

In the second row of Figure \ref{fig:soss11B}, we also report the evolution of the 
orbital elements, i.e. semi-major axis $a$ and eccentricity $e$, over 1000 orbital 
revolution periods. The elements $a$, $e$ follow regular and oscillatory motions and 
stay close to their initial conditions, as shown in Figure \ref{fig:soss11B}, second row. 

Next, we investigate the long-term behaviour of the solution in the dissipative case. We take two different initial conditions in $(p_1,\theta_1)$, $(p_2,\theta_2)$ and perform a
long term integration over 100\,000 orbital revolution periods of the two bodies; the black dots mark the final end states.  
The results are given in the 3rd row of Figure \ref{fig:soss11B} and show that all initial conditions tend towards attractors, marked by black points in both panels, very close to the location of the unperturbed (1:1,1:1) resonance. 

We complement these results with those of the fourth row of Figure \ref{fig:soss11B}, providing semimajor axis and eccentricity as a function of time. We observe that, during the orbital evolution the trajectory experiences a transient irregular phase, followed by  
a regular oscillatory behaviour when the spin motions are finally approaching the final attractor. In terms of the orbital elements, the end state is an oscillation around the initial values of $a$ and $e$,  while depending on the choice of the initial
condition of the spin dynamics. 

%In conclusion, in the full spin-spin problem the choice of initial %conditions is determining the end state. 

We also repeated the same set of simulations for the parameters of the Pluto-Charon
system.  The comparison between the full and Keplerian case lead to the same conclusions 
as for the Patroclus-Menoetius case.  However, it turns out that the differences between 
the Keplerian and full problems are much less evident: the phase portraits (not shown 
here) are in better agreement.  Moreover, the dynamics of the orbital elements stays closer to the initial values. We conclude that the magnitudes of the conservative and dissipative parameters are relevant to see any difference between the Keplerian and full problems; hence, as it is natural to expect, the Keplerian case is a good approximation of the full problem, provided the parameters are small enough.

\begin{figure}[h]
	\center
\includegraphics[height=4.08cm]{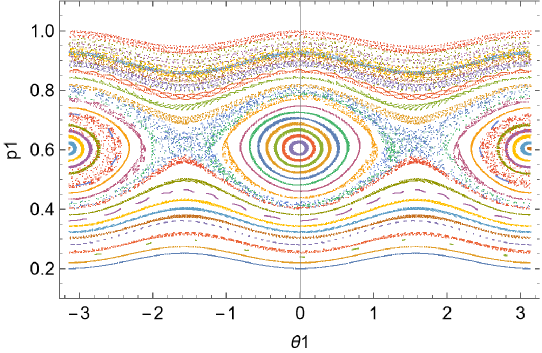}
\includegraphics[height=4.08cm]{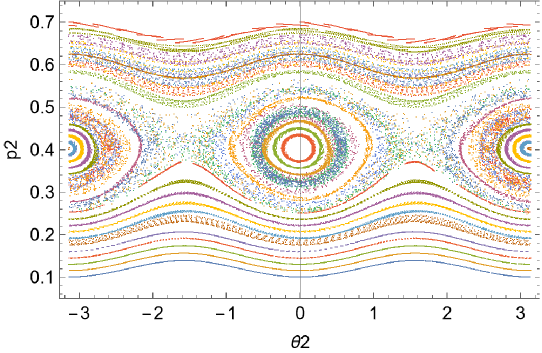}\\
	\includegraphics[height=4.0cm]{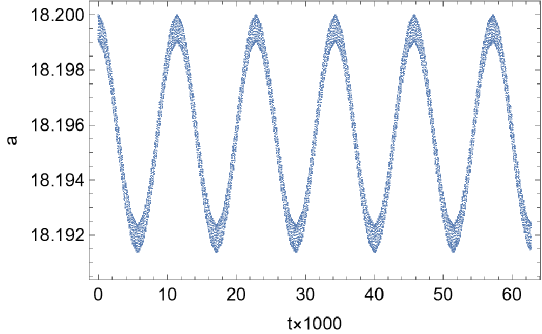}
	\includegraphics[height=4.0cm]{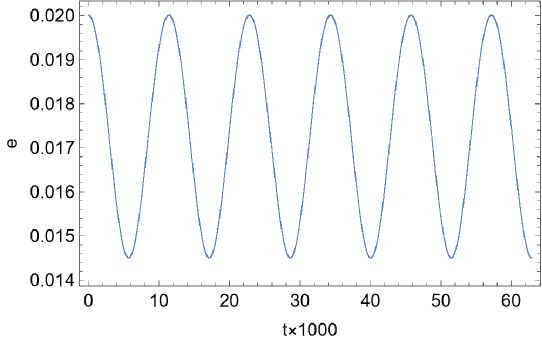}\\
	\includegraphics[height=4.08cm]{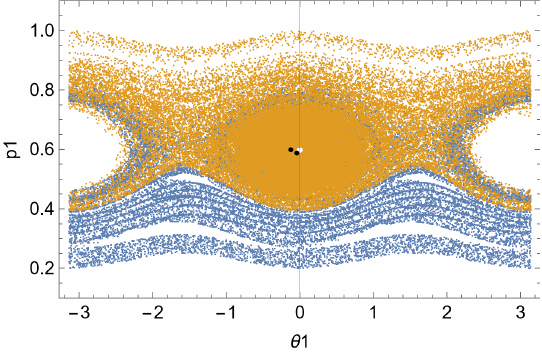}
	\includegraphics[height=4.08cm]{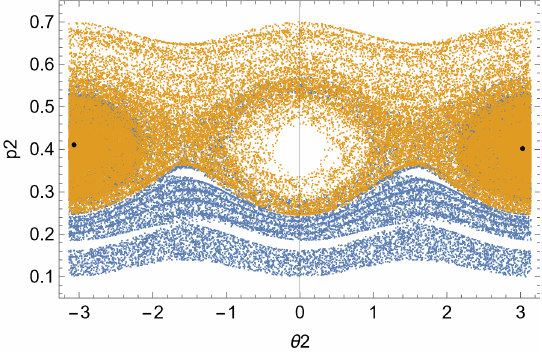}\\
	\includegraphics[height=4.08cm]{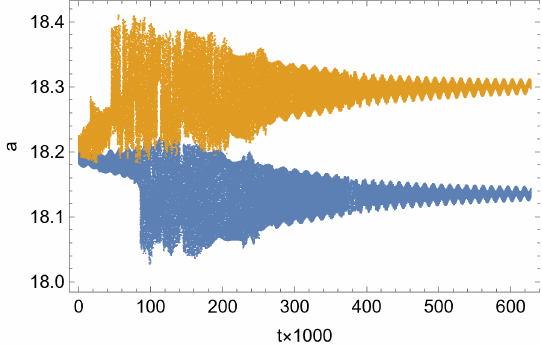}
	\includegraphics[height=4.08cm]{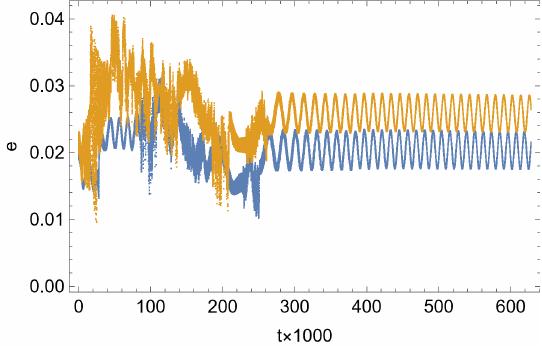}\\
	\caption{Comparison between conservative (1st and 2nd row) and dissipative (3rd and 4th row) dynamics in the full spin-spin problem for Patroclus-Menoetius.\\
	1st row: Poincar\'e sections projected on $(\theta_1,p_1)$, $(\theta_2,p_2)$ in the conservative case for the same initial conditions and parameters as shown in the second row in Fig. 3.\\
	2nd row: evolution of semi-major axis $a$ and eccentricity $e$ for the initial condition $(p_1,\theta_1)=(0.6,0)$, $(p_2,\theta_2)=(0.4,0)$ (close to the centers of the librational islands).\\
	3rd row: long-term integration for two specific initial conditions,
	$(p_1,\theta_1)=(0.2,0)$, $(p_2,\theta_2)=(0.4,0)$ (blue) and $(p_1,\theta_1)=(1,0)$, $(p_2,\theta_2)=(0.4,0)$ (yellow) in the
	dissipative case with $\bar\gamma_1=6\times10^{-6}$ and $\bar\gamma_2=4\times10^{-6}$; the black dots indicate the state of the orbit at the end of the integration time.\\
	4th row: shows the corresponding evolution of $a$ and $e$ for the
	orbits shown in row 3.}\label{fig:soss11B}
\end{figure}

\section{Conclusions}\label{sec:conclusions}
Given the increased interest in the exploration of minor bodies of the Solar system, the investigation of rotational dynamics plays a relevant role. In this work, we analyzed the rotational dynamics of a binary system under different models of increasing complexity: $(i)$ the spin-orbit problem, in which the rotational dynamics of the two bodies are decoupled and the orbits are assumed to be Keplerian ellipses; $(ii)$ the Keplerian spin-spin problem, in which there is an interaction between the rotational dynamics of the two bodies and the orbits are still assumed to be fixed ellipses; $(iii)$ the full spin-spin problem, in which there is an interaction of the rotational dynamics and also a coupling with the orbit, which is not constrained to be a Keplerian ellipse. For the main resonances, namely the $(1:1,1:1)$, $(3:2,3:2)$, $(1:1,3:2)$ resonances, we produced analytical results by studying the linear stability of the different models, and numerical computations by integrating the equations of motion. 

The investigation based on the linear stability for a sample equilibrium point, namely the origin, showed that the results are different for the spin-orbit and the Keplerian spin-spin problem, since in the former case one has imaginary eigenvalues for any eccentricity less than $\sqrt{2/5}$, while in the latter case one has linear stability for eccentricity less than $1/\sqrt{11}$; in the dissipative case, one needs to give constraints on the dissipative factors in order to get the stability of the origin. Same results are obtained for the $(1:1,3:2)$ resonance, while a different behavior is observed for the $(3:2,3:2)$ resonance for which the origin is linearly stable for any value of the eccentricity. 

We also added some results on the numerical integration of the Keplerian spin-orbit and spin-spin problems, noticing an interaction between the rotational states of the two bodies, which is observed especially in the librational regions. When adding the dissipation on both bodies, they tend towards the respective attractors; when the dissipation acts only on one body, the dissipative effect acts on both bodies, although at a minor extent for the body on which the dissipation is not acting directly. 

Another interesting result concerns the numerical integration of the full spin-spin problem in which also the orbit is perturbed and the orbital elements, in particular the semimajor axis and the eccentricity, vary with time. In particular, the effect of the variation with time of the trajectory provokes a disappearance of higher order islands. Finally, in our sample case of the binary asteroid Patroclus and Menoetius, the dissipation leads to attractors close to the unperturbed location of the $(1:1,1:1)$ resonance together with oscillations of semimajor axis and eccentricity.

\appendix

\section{Computation of $\bar\gamma_1$, $\bar\gamma_2$, $\bar\mu_1$, $\bar\mu_2$ in \equ{gm}}\label{app:gm}

We remind the following formulae from Kepler's motion (in our units):
\beqano
n&=&{{2\pi}\over T}=\sqrt{G}\ a^{-{3\over 2}}=1\nonumber\\
r&=&{{a(1-e^2)}\over {1+e \cos f}}\nonumber\\
\dot f&=&{{df}\over {dt}}={{df}\over {d\ell}}{{d\ell}\over {dt}}=n{{df}\over {d\ell}}\nonumber\\
{{d\ell}\over {df}}&=&{n\over {\dot f}}={{r^2}\over
{a^2\sqrt{1-e^2}}}\ . \eeqano Then, we have the following relation
for the average of $(a/r)^6$: 
\beqano
\langle ({a\over {r(t)}})^6\rangle&=&{1\over {2\pi}} \int_0^{2\pi}({a\over {r(t)}})^6d\ell\nonumber\\
&=&{1\over {2\pi}} \int_0^{2\pi}({a\over {r(t)}})^6{{r^2}\over {a^2\sqrt{1-e^2}}}df\nonumber\\
&=&{1\over {2\pi}} {1\over {(1-e^2)^{9\over 2}}} \int_0^{2\pi} (1+e\cos f)^4 df\nonumber\\
&=&{1\over {(1-e^2)^{9\over 2}}}\ (1+3e^2+{3\over 8}e^4)\ .
\eeqano
In a similar way, we obtain the following relation for the average of $(a/r)^6\,\dot f$:
\beqano
\langle ({a\over {r(t)}})^6\,\dot f\rangle&=&{1\over {2\pi}} \int_0^{2\pi}({a\over {r(t)}})^6\,\dot fd\ell\nonumber\\
&=&{1\over {2\pi}} \int_0^{2\pi}({a\over {r(t)}})^6{{d\ell}\over {df}}\, n\, {{df}\over {d\ell}}\,df\nonumber\\
&=&{1\over {2\pi}} n\, \int_0^{2\pi}{{(1+e\cos f)^6}\over {(1-e^2)^6}} df\nonumber\\
&=&{n\over {(1-e^2)^6}}\ (1+{{15}\over 2}e^2+{{45}\over 8}e^4+{5\over {16}}e^6)\ .
\eeqano
From \equ{diss}, we obtain the averaged equations with
$$
\bar\gamma_j=\delta_j{ C_j}\ \langle ({a\over {r(t)}})^6\rangle=\delta_j{ C_j}
{1\over {(1-e^2)^{9\over 2}}}\ (1+3e^2+{3\over 8}e^4)\ ,\qquad j=1,2
$$
and
\beqano
\bar\mu_j&=&{{\delta_j{ C_j}\ \langle ({a\over {r(t)}})^6\,\dot f\rangle}\over {\delta_j{ C_j}\ \langle ({a\over {r(t)}})^6\rangle}}\nonumber\\
&=&{n\over {(1-e^2)^{3\over 2}}}\ {{1+{{15}\over 2}e^2+{{45}\over 8}e^4+{5\over {16}}e^6}\over {1+3e^2+{3\over 8}e^4}}\ ,\qquad j=1,2\ .
\eeqano
Notice that $\bar\gamma_1\not=\bar\gamma_2$ whenever $\delta_1 C_1\not=\delta_2 C_2$, while $\bar\mu_1=\bar\mu_2$.

\section{Expansion of the potential to the second order in the eccentricity}\label{sub:potential}
We give below the expansion of $V_2$ and $V_4$ in \equ{V024} up to second order in
the eccentricity:

\begin{eqnarray*}
V_2 &=& -\frac{G M_2 q_1}{4 a^3} - \frac{3 e^2 G M_2 q_1}{8 a^3} - \frac{G M_1 q_2}{4 a^3} - \frac{3 e^2 G M_1 q_2}{8 a^3} - \frac{3 e G M_2 q_1 \cos(t)}{4 a^3} - \frac{3 e G M_1 q_2 \cos(t)}{4 a^3}\\
&-& \frac{9 e^2 G M_2 q_1 \cos(2 t)}{8 a^3} - \frac{9 e^2 G M_1 q_2 \cos(2 t)}{8 a^3} + \frac{3 d_1 e G M_2 \cos(t - 2 \theta_1)}{8 a^3}\\
&-& \frac{3 d_1 G M_2 \cos(2 t - 2 \theta_1)}{4 a^3} + \frac{15 d_1 e^2 G M_2 \cos(2 t - 2 \theta_1)}{8 a^3} - \frac{21 d_1 e G M_2 \cos(3 t - 2 \theta_1)}{8 a^3}\\
&-& \frac{51 d_1 e^2 G M_2 \cos(4 t - 2 \theta_1)}{8 a^3} + \frac{3 d_2 e G M_1 \cos(t - 2 \theta_2)}{8 a^3} - \frac{3 d_2 G M_1 \cos(2 t - 2 \theta_2)}{4 a^3}\\
&+& \frac{15 d_2 e^2 G M_1 \cos(2 t - 2 \theta_2)}{8 a^3} - \frac{21 d_2 e G M_1 \cos(3 t - 2 \theta_2)}{8 a^3} - \frac{51 d_2 e^2 G M_1 \cos(4 t - 2 \theta_2)}{8 a^3}
\end{eqnarray*}

\newpage
\begin{eqnarray*}
V_4 & = & -\frac{45 d_2^2 G M_1}{448 a^5 M_2} - \frac{225 d_2^2 e^2 G M_1}{448 a^5 M_2} - \frac{45 d_1^2 G M_2}{448 a^5 M_1} - \frac{225 d_1^2 e^2 G M_2}{448 a^5 M_1} - \frac{45 G M_2 q_1^2}{224 a^5 M_1}\\
&-& \frac{225 e^2 G M_2 q_1^2}{
 224 a^5 M_1} - \frac{9 G q_1 q_2}{16 a^5} - \frac{45 e^2 G q_1 q_2}{16 a^5} - \frac{
 45 G M_1 q_2^2}{224 a^5 M_2} - \frac{225 e^2 G M_1 q_2^2}{224 a^5 M_2} \\
 &-& \frac{225 d_2^2 e G M_1 \cos(t)}{448 a^5 M_2} - \frac{225 d_1^2 e G M_2 \cos(t)}{448 a^5 M_1} - \frac{225 e G M_2 q_1^2 \cos(t)}{224 a^5 M_1} - \frac{ 45 e G q_1 q_2 \cos(t)}{16 a^5}\\
 &-& \frac{225 e G M_1 q_2^2 \cos(t)}{224 a^5 M_2} - \frac{225 d_2^2 e^2 G M_1 \cos(2 t)}{224 a^5 M_2} - \frac{225 d_1^2 e^2 G M_2 \cos(2 t)}{224 a^5 M_1}\\ 
 &-& \frac{ 225 e^2 G M_2 q_1^2 \cos(2 t)}{112 a^5 M_1} - \frac{45 e^2 G q_1 q_2 \cos(2 t)}{8 a^5} - \frac{225 e^2 G M_1 q_2^2 \cos(2 t)}{112 a^5 M_2} \\ 
 &-& \frac{75 d_1^2 e^2 G M_2 \cos(2 t - 4 \theta_1)}{128 a^5 M_1} + \frac{225 d_1^2 e G M_2 \cos(3 t - 4 \theta_1)}{128 a^5 M_1} - \frac{75 d_1^2 G M_2 \cos(4 t - 4 \theta_1)}{64 a^5 M_1}\\
 &+& \frac{825 d_1^2 e^2 G M_2 \cos(4 t - 4 \theta_1)}{64 a^5 M_1} - \frac{975 d_1^2 e G M_2 \cos(5 t - 4 \theta_1)}{128 a^5 M_1} - \frac{3825 d_1^2 e^2 G M_2 \cos(6 t - 4 \theta_1)}{128 a^5 M_1}\\
 &-& \frac{75 d_1 e G M_2 q_1 \cos(t - 2 \theta_1)}{224 a^5 M_1} - \frac{15 d_1 e G q_2 \cos(t - 2 \theta_1)}{32 a^5} - \frac{75 d_1 G M_2 q_1 \cos(2 t - 2 \theta_1)}{112 a^5 M_1}\\
 &-& \frac{75 d_1 e^2 G M_2 q_1 \cos(2 t - 2 \theta_1)}{112 a^5 M_1} - \frac{15 d_1 G q_2 \cos(2 t - 2 \theta_1)}{16 a^5} - \frac{15 d_1 e^2 G q_2 \cos(2 t - 2 \theta_1)}{16 a^5}\\
 &-& \frac{675 d_1 e G M_2 q_1 \cos(3 t - 2 \theta_1)}{224 a^5 M_1} - \frac{135 d_1 e G q_2 \cos(3 t - 2 \theta_1)}{32 a^5} - \frac{3975 d_1 e^2 G M_2 q_1 \cos(4 t - 2 \theta_1)}{448 a^5 M_1}\\
 &-& \frac{795 d_1 e^2 G q_2 \cos(4 t - 2 \theta_1)}{64 a^5} - \frac{225 d_1 e^2 G M_2 q_1 \cos(2 \theta_1)}{448 a^5 M_1} - \frac{45 d_1 e^2 G q_2 \cos(2 \theta_1)}{64 a^5}\\
 &-& \frac{75 d_2^2 e^2 G M_1 \cos(2 t - 4 \theta_2)}{128 a^5 M_2} + \frac{225 d_2^2 e G M_1 \cos(3 t - 4 \theta_2)}{128 a^5 M_2} - \frac{75 d_2^2 G M_1 \cos(4 t - 4 \theta_2)}{64 a^5 M_2}\\
 &+& \frac{825 d_2^2 e^2 G M_1 \cos(4 t - 4 \theta_2)}{64 a^5 M_2} - \frac{975 d_2^2 e G M_1 \cos(5 t - 4 \theta_2)}{128 a^5 M_2} - \frac{3825 d_2^2 e^2 G M_1 \cos(6 t - 4 \theta_2)}{128 a^5 M_2}\\ 
 &-& \frac{15 d_2 e G q_1 \cos(t - 2 \theta_2)}{32 a^5} - \frac{75 d_2 e G M_1 q_2 \cos(t - 2 \theta_2)}{224 a^5 M_2} - \frac{15 d_2 G q_1 \cos(2 t - 2 \theta_2)}{16 a^5}\\
 &-& \frac{15 d_2 e^2 G q_1 \cos(2 t - 2 \theta_2)}{16 a^5} - \frac{75 d_2 G M_1 q_2 \cos(2 t - 2 \theta_2)}{112 a^5 M_2} - \frac{75 d_2 e^2 G M_1 q_2 \cos(2 t - 2 \theta_2)}{112 a^5 M_2}\\
 &-& \frac{135 d_2 e G q_1 \cos(3 t - 2 \theta_2)}{32 a^5} - \frac{675 d_2 e G M_1 q_2 \cos(3 t - 2 \theta_2)}{224 a^5 M_2} - \frac{795 d_2 e^2 G q_1 \cos(4 t - 2 \theta_2)}{64 a^5}\\
 &-& \frac{3975 d_2 e^2 G M_1 q_2 \cos(4 t - 2 \theta_2)}{448 a^5 M_2} - \frac{105 d_1 d_2 e^2 G \cos(2 t - 2 \theta_1 - 2 \theta_2)}{64 a^5}\\
 &+& \frac{315 d_1 d_2 e G \cos(3 t - 2 \theta_1 - 2 \theta_2)}{64 a^5} - \frac{105 d_1 d_2 G \cos(4 t - 2 \theta_1 - 2 \theta_2)}{32 a^5}\\
 &+& \frac{1155 d_1 d_2 e^2 G \cos(4 t - 2 \theta_1 - 2 \theta_2)}{32 a^5} - \frac{1365 d_1 d_2 e G \cos(5 t - 2 \theta_1 - 2 \theta_2)}{64 a^5}\\
 &-& \frac{5355 d_1 d_2 e^2 G \cos(6 t - 2 \theta_1 - 2 \theta_2)}{64 a^5} - \frac{9 d_1 d_2 G \cos(2 \theta_1 - 2 \theta_2)}{32 a^5} - \frac{45 d_1 d_2 e^2 G \cos(2 \theta_1 - 2 \theta_2)}{32 a^5}\\
 &-& \frac{45 d_1 d_2 e G \cos(t + 2 \theta_1 - 2 \theta_2)}{64 a^5} - \frac{45 d_1 d_2 e^2 G \cos(2 t + 2 \theta_1 - 2 \theta_2)}{32 a^5} - \frac{45 d_2 e^2 G q_1 \cos(2 \theta_2)}{64 a^5}\\
 &-& \frac{225 d_2 e^2 G M_1 q_2 \cos(2 \theta_2)}{448 a^5 M_2} - \frac{45 d_1 d_2 e G \cos(t - 2 \theta_1 + 2 \theta_2)}{64 a^5} - \frac{45 d_1 d_2 e^2 G \cos(2 t - 2 \theta_1 + 2 \theta_2)}{32 a^5}
\end{eqnarray*}

\bibliographystyle{abbrv}
\bibliography{references}

\end{document}